\documentclass{article}

\usepackage{PRIMEarxiv}

\usepackage[utf8]{inputenc} 
\usepackage[T1]{fontenc}    
\usepackage{hyperref}       
\usepackage{url}            
\usepackage{booktabs}       
\usepackage{nicefrac}       
\usepackage{microtype}      
\usepackage{lipsum}
\usepackage{fancyhdr}       

\usepackage{amsthm}
\usepackage{amsfonts}
\usepackage{amsmath}
\usepackage{amssymb}
\usepackage{circledsteps}
\usepackage{graphicx}
\usepackage{hyperref}

\newcommand{\dg }{^{\dagger}}
\newcommand{\R}{\mathbb{R}}
\newcommand{\tr}{\mathrm{tr}}

\renewcommand{\dag}{\dagger}
\newtheorem{prop}{Proposition} 
\newtheorem{theorem}{Theorem} 
\newtheorem{lemma}{Lemma}

\DeclareMathOperator*{\argmin}{argmin} 

\hypersetup{%
    pdftitle={Direct Methods for Calculating Pseudoinverses},
    pdfauthor={Jeff Knisley},
    pdfkeywords={Pseudoinverses, Regression, Sparsity}
    }

\title{Direct Methods for Calculating Pseudoinverses
\thanks{\textit{\underline{Citation}}: 
\textbf{Authors. Title. Pages.... DOI:000000/11111.}} 
}

\author{
  Jeff Knisley \\
  Department of Mathematics and Statistics \\
  East Tennessee State University \\
  Johnson City, TN 3614-0663\\
  \texttt{\{Jeff Knisley\}knisleyj@etsu.edu} \\
  }

\begin{document}
\maketitle

\begin{abstract}
The Moore-Penrose pseudoinverse of a matrix can be defined and calculated using its singular value decomposition.  There are also direct methods for computing matrix pseudo-inverses (those that avoid eigenvalue computations), but these are often rank-revealing, poorly conditioned, or otherwise limited in practice.  In this paper, we demonstrate that direct methods can overcome these limitations.  In particular, we reinterpret several existing direct methods and introduce new variations that are appropriate for large scale or sparse multilinear regression 
applications.
\end{abstract}

\begin{keywords}
{pseudoinverses, regression, sparsity}
\end{keywords}

\section{Introduction}

Many data science and machine learning applications rely on multi-linear
regression and least squares, and often such problems are solved
using a Moore-Penrose pseudoinverse (MP pseudoinverse) \cite{penrose1955generalized}. In principle, all multi-linear regression problem 
can be solved using an MP pseudoinverse, but in practice, iterative least squares solvers such as Paige and Saunders' 
LSQR algorithm are often preferred due to their numerical stability
\cite{paige1982lsqr, grisetti2020least}.  Moreover, although there are direct 
methods for computing the pseudoinverse of a matrix, such as through 
the calculation of explicit inverses in solving the normal equations
\cite{benisrael2003generalized}, preferred methods for 
the calculation of a pseudoinverse tend to rely on the full singular 
value decomposition of a matrix \cite{golub1970singular}.  

In this paper, we make the case for pseudoinverses as solvers for least squares
problems. In addition, we also make the case for direct methods for 
calculating pseudoinverses, including via limit forms.  We do so by addressing key limitations in the use of direct methods, including 
\begin{itemize}
    \item \textbf{Conditioning Issues:}  Even if $A\in\R^{n\times n}$ is not ill-conditioned, the products $A^TA$ and $AA^T$ might be because their condition number is the \emph{square} of the condition number of $A$
    \item \textbf{Rank Revealing Algorithms: } Singular values of a matrix sufficiently close to 0 make rank estimation of the matrix impractical in finite arithmetic where rounding to 0 is unavoidable, so algorithms which require or reveal the exact rank  of a matrix are likewise impractical. 
    \item \textbf{Dense Pseudoinverses of Sparse Matrices: } A difficult issue for all approaches is that pseudoinverses of large, sparse matrices tend to be dense, thus necessitating algorithms that do not need an explicit representation of the pseudoinverse itself. 
\end{itemize}
There are also other issues, such as speed, stability, and similar, but these three are the focus of this article.

In particular, despite the excellent existing methods for least squares
solvers for sparse linear equations, the need for methods to address
sparse regression problems is only increasing \cite{mishra2021accelerating}. 
Large language models, for instance, have increased the need for sparse 
matrix solvers \cite{zaheer2020bigbird,hoefler2021sparsity}. This includes
the need for fault-tolerant direct sparse solvers such as the bidiagonal 
block-tearing methods proposed later in this paper \cite{wang2025schwarz}.

The paper is organized as follows.  Section 2 is a brief review of matrix 
pseudoinverses. Section 3 derives several properties of bidiagonal matrices and section 4 investigates generalizations of Lanczos Bidiagonalization methods.  Section 5 discusses error bounds and implementations.  Finally, section 6 examines an entirely 
different method -- 
a variant of familiar regularization methods -- for both MP and group 
pseudoinverses. Section 7 contains additional insights into why direct methods might be of value and how they might be utilized in large scale least squares and other linear algebra applications.

\section{Brief Review of Pseudoinverses}
Most of this section is covered in full detail 
in \cite{benisrael2003generalized}.  To begin with, if $\R^{m\times n}$ denotes the vector space of real $m\times n$ matrices, then the Moore-Penrose 
pseudo-inverse of $A\in \mathbb{R}^{m\times n}$ is the unique matrix $A^\dagger$ which
satisfies the four properties in Table \ref{Table1}.  
\begin{table}[htb]
    \centering
\begin{tabular}{lllll}
1. & $AA^{\dagger }A=A$ &  & 3. & $AA\dg $ is self-adjoint \\ 
2. & $A\dg AA\dg =A\dg $ &  & 4. & $A\dg A$ is self-adjoint
\end{tabular} 
    \caption{The Four Defining Properties of the Moore Penrose Pseudoinverse}
    \label{Table1}
\end{table}
It is straightforward to show that $\left( A^{T}\right)\dg =\left(
A\dg \right)^{T}$ and that if $A$ is invertible (and thus square) that $
A\dg =A^{-1}.$  

If $b\in \mathbb{R}^{m}$ and $x_{p}=A\dg b,$ then condition 1 implies
that 
$$
A^{T}Ax_{p}=\left( AA\dg A\right)^{T}A\left( A\dg b\right)
=A^{T}\left( AA\dg \right)^{T}AA\dg b
$$
Condition 3 then leads to 
$$
A^{T}Ax_{p}=A^{T}AA\dg AA\dg b=A^{T}AA\dg b=\left( \left(
AA\dg \right)^{T}A\right)^{T}b
$$
Conditions 1 and 3 again imply $A^{T}Ax_{p}=\left( AA\dg A\right)^{T}b=A^{T}b$.  
That is, conditions 1 and 3 are sufficient for $x_{p}=A\dg b$ to be a
solution to the normal equations 
$$
A^{T}Ax=A^{T}b
$$

As a result, conditions 1 and 3 are sufficient for
\begin{equation}
A\dg b\in \argmin_x   \left\Vert Ax-b\right\Vert_{2}^{2} \label{LeastSquares}
\end{equation}
Moreover, we can extend (\ref{LeastSquares}) to show that
\begin{equation}
A\dg B=\argmin_{X}\left\Vert AX-B\right\Vert_{Fro}^{2}
\label{MatrixLS}
\end{equation}
There are many applications where an MP pseudoinverse is applied more than once, often in association with (\ref{MatrixLS}).  For example, if there are multiple matrices $B$ for which (\ref{MatrixLS}) must be solved, a direct method that calculates $A$ only once would be highly advantageous.

However, calculation of an MP pseudoinverse tends to be computationally
expensive. Indeed (\ref{MatrixLS}) is often avoided in applications, and in 
general \cite{golub1970singular}, the typical approach to obtaining 
$A\dg $ from $A$ is via the Singular Value Decomposition (SVD) 
$$
A=U\left[ 
\begin{array}{cc}
\Sigma_{r} & 0 \\ 
0 & 0
\end{array}
\right] V^{T},  \quad \Sigma_{r}=diag\left( \sigma_{1},\ldots ,\sigma_{r}\right)
$$
where $U\in \mathbb{R}^{m\times m}, V\in \mathbb{R}^{n\times n}$ are
orthogonal and $\sigma_{1}\geq \ldots \geq \sigma_{r}>0$ are the singular
values of $A$ (thus implying that $A$ has rank $r$).  The Moore-Penrose
pseudoinverse is correspondingly defined 
$$
A\dg =V\left[ 
\begin{array}{cc}
\Sigma_{r}^{-1} & 0 \\ 
0 & 0
\end{array}
\right]^{T}U^{T}
$$

If we define the \emph{effective condition number} of $A\in \R^{m\times n}$ to be 
$C_{\mathit{eff}}(A) = \sigma_1/\sigma_r$, then 
$$C_{\mathit{eff}}\left(A^TA\right) = \left( \; C_{\mathit{eff}}(A) \; \right)^2$$
Thus,  methods based on either $A^{T}A$ or $AA^{T}$ (or both) can be ill-conditioned even if $A$ itself is not. The SVD approach is often preferred over direct methods because the Golub-Kahan algorithm does not rely on such products \cite{golub1965calculating}. 

There are also direct methods for calculating $A\dg ,$ often based on the
fact that if $C$ is full column rank and $R$ is full row rank that 
$\left( CR\right)\dg =R\dg C\dg $.
Thus, if $C$ is a set of independent columns spanning $ran\left( A\right) $
and $R$ is the non-zero rows of the corresponding reduced row echelon form
(Strangs CR decomposition), then $A=CR$ implies that $A\dg = R\dg C\dg$.  Since $R$ and $C$ are full column and row ranks, respectively, we have
\begin{equation}
A\dg = R^T \left( R R^T\right)^{-1} \left( C^T C\right)^{-1} C^T = R^{T}\left( C^{T}CRR^{T}\right)^{-1}C^{T} \label{MacDuffee}
\end{equation}
which is known as the MacDuffee formula. However, while MacDuffee's formula is sufficient for some applications -- it is the method used in the computer algebra system \textbf{sympy} -- we again have a method that depends on products of matrices and their transposes.

The SVD itself can be computationally expensive, often leading to the
avoidance of the pseudo-inverse concept altogether. Sometimes this avoidance
is via Tikhonov regularization (ridge regression) \cite{fuhry2011new} since 
\begin{equation}
A^{\dagger }=\lim_{\varepsilon \rightarrow 0^{+}}\left( A^{T}A+\varepsilon
I_{n}\right)^{-1}A^{T}=\lim_{\varepsilon \rightarrow 0^{+}}A^{T}\left(
AA^{T}+\varepsilon I_{m}\right)^{-1}  \label{Regularization}
\end{equation}
which can be demonstrated using the uniqueness of $A^{\dagger }$ \cite{benisrael2003generalized}.  

Finally, if $A\in \R^{n\times n}$ is a square matrix, then there exists a smallest positive integer $k$ for which 
$$\mathrm{rank}\left( A^k \right) = \mathrm{rank}\left(A^{k+1}\right)$$
We say that $k$ is the index of $A$, and we define 
$$A^D = \lim_{\varepsilon \rightarrow 0} A^k \left(A^{k+1} + \varepsilon^2 I_n\right)^{-1} $$
which is known as the \emph{Drazin} pseudoinverse of $A$.  The three defining
properties of $A^{D}$ are 
\begin{equation}
A^{k}A^{D}A=A^{k},  \quad A^{D}AA^{D}=A^{D}, \quad  AA^{D}=A^{D}A \label{DrazinProperties}
\end{equation}
which can be used to show that $A^{D}$ is unique to $A$.

For comparison, we define a \emph{super-diagonal } matrix to be any
$m\times n$ matrix whose only non-zero coefficients are on its superdiagonal, 
and for $A\in\R^{m\times n}$ we use the notation $A^{-T}$ to denote the matrix of reciprocals of non-zero coefficients of $A^T$.  If $N\in\R^{n\times n}$ is a superdiagonal matrix, then $N$ is nilpotent and its Moore-Penrose and Drazin inverses are, respectively, 
$$
N\dg = N^{-T}   \quad  \text{ and }  \quad  N^{D}=0
$$
Indeed, the Drazin inverse is often important in applications because it
annihilates the nilpotent part of the Jordan form of a matrix. 

If $k=1$, then the Drazin pseudo-inverse is instead called the 
\emph{group} pseudoinverse and is denoted $A^{\#}$.  Moreover, the three properties (\ref{DrazinProperties}) for the group pseudoinverse are 
$$
AA^{\#}A=A,  \quad  A^{\#}AA^{\#}=A^{\#},  \quad  AA^{\#}=A^{\#}A
$$
If $A^{\#}$ exists (that is, $A$ has an index of $k=1$), then it is unique and has a
(\ref{Regularization}) form of 
$$
A^{\#}=\lim_{\varepsilon \rightarrow 0}A \left( A^{2}+\varepsilon^2
I_{m}\right)^{-1}
$$
If $A$ is
diagonalizable, then the Jordan form has no nilpotent part and the group
inverse is defined by the reciprocals of the non-zero eigenvalues (in
analogy with the Moore-Penrose pseudoinverse).  In particular, if $A$ is self-adjoint, then $A$ has index 1 and $A^{2}=A^{T}A$ is positive semi-definite.  That is, if $A\in\R^{n\times n}$ is a symmetric matrix, then $A^{\#} = A\dg$.

Moreover, the group pseudoinverse of $A^{k}$ is $\left( A^{\#}\right)^{k}$ for all $k\in 
\mathbb{Z}^{+}.$  Thus, if we denote $A^{0}=A^{\#}A,$ then 
$$
A^{s+t}=A^{s}A^{t}  \qquad \forall s,t\in \mathbb{Z}
$$
and the ``powers'' of $A$ constitute an
abelian group (hence the name).  For this reason, $A^{\#}$ is often called
the \emph{group inverse} (rather then the group pseudoinverse).  

\section{Decompositions of Bidiagonal Matrices}
Both pseudoinverse and least squares methods tend to apply bidiagonalization algorithms to $A\in\R^{m\times n}$ to produce orthogonal transformations $U,V$ for which $A=UBV^{T}$
and $B$ is bidiagonal \cite{golub2013matrix}.  For example, SVD methods for pseudoinverses tend to rely on bidiagonalization followed by a limiting process with Given's rotations that ``chases'' the superdiagonal
coefficients to 0, thus producing the SVD of $A$ from which the pseudo-inverse is produced.   

However, the bidiagonal matrix $B$ itself is a desirable target for direct calculation of a pseudo-inverse.  It is upper triangular with a
well-defined nilpotent part; and also we have 
$$
\left( UBV^{T}\right)\dg =VB\dg U^{T}
$$
Similarly, if $A$ is an $m\times m$ square matrix, then there exists a
unitary matrix $V$ such that $A=VRV^{T}$ where $R$ is upper triangular (the
Schur decomposition).  There also exists an elementary matrix $E$ for which 
$$
R=EBE^{-1}
$$
where $B$ is bidiagonal \cite{benisrael2003generalized}. Since the Drazin pseudoinverse is unique, it follows that if $A$ has index $k,$ then 
$$
A^{D}=VE \;\; B^{D} \;\; E^{-1}V^{T}
$$

That is, the study of pseudoinverses can be reduced to the study of
pseudo-inverses of bidiagonal matrices. Direct methods for calculating pseudoinverses are those that are applied to the bidiagonal matrix in a bidiagonalization-produced decomposition.

For a matrix $A \in \R^{m\times n}$, we let $A^i$ denote either of the Moore Penrose pseudoinverse or the Drazin pseudoinverse. Suppose $A\in \mathbb{R}^{m\times n}$ is block diagonal with blocks $A_{j}\in \mathbb{R}^{m_{j}\times n_{j}},$ $j=1,\ldots ,\ell .$  It is then straightforward to
show that 
$$
A=\left[ 
\begin{array}{ccc}
A_{1} &  &  \\ 
& \ddots &  \\ 
&  & A_{\ell }
\end{array}
\right] \quad \implies \quad A^{i}=\left[ 
\begin{array}{ccc}
A_{1}^{i} &  &  \\ 
& \ddots &  \\ 
&  & A_{\ell }^{i}
\end{array}
\right]
$$
where $A_{j}^{i}\in \mathbb{R}^{n_{j}\times m_{j}}$ for all $j=1,\ldots
,\ell .$  To illustrate, notice that 
$$
B=\left[ 
\begin{array}{ccccc}
\alpha_{1} & \beta_{1} & 0 & 0 & 0 \\ 
0 & \alpha_{2} & \beta_{2} & 0 & 0 \\ 
0 & 0 & 0 & \beta_{3} & 0 \\ 
0 & 0 & 0 & \alpha_{4} & \beta_{4}
\end{array}
\right]
$$
is block diagonal and that 
\begin{eqnarray*}
\left[ 
\begin{array}{ccc}
\alpha_{1} & \beta_{1} & 0 \\ 
0 & \alpha_{2} & \beta_{2}
\end{array}
\right]\dg  &=&\frac{1}{\alpha_{1}^{2}\alpha_{2}^{2}+\alpha_{1}^{2}\beta_{2}^{2}+\beta_{1}^{2}\beta_{2}^{2}}\left[ 
\begin{array}{cc}
\alpha_{1}\alpha_{2}^{2}+\alpha_{1}\beta_{2}^{2} & -\alpha_{1}\alpha_{2}\beta_{1} \\ 
\beta_{1}\beta_{2}^{2} & \alpha_{1}^{2}\alpha_{2} \\ 
-\alpha_{2}\beta_{1}\beta_{2} & \alpha_{1}^{2}\beta_{2}+\beta_{1}^{2}\beta_{2}
\end{array}
\right] \allowbreak ,      \\
\left[ 
\begin{array}{cc}
\beta_{3} & 0 \\ 
\alpha_{4} & \beta_{4}
\end{array}
\right]\dg  &=&\frac{1}{\beta_{3}\beta_{4}}\left[ 
\begin{array}{cc}
\beta_{4} & 0 \\ 
-\alpha_{4} & \beta_{3}
\end{array}
\right]
\end{eqnarray*}
The Moore-Penrose pseudo inverse of $A$ itself is subsequently 
$$
B\dg =\left[ 
\begin{array}{cc}
\left[ 
\begin{array}{ccc}
\alpha_{1} & \beta_{1} & 0 \\ 
0 & \alpha_{2} & \beta_{2}
\end{array}
\right]\dg  & 0 \\ 
0 & \left[ 
\begin{array}{cc}
\beta_{3} & 0 \\ 
\alpha_{4} & \beta_{4}
\end{array}
\right]\dg 
\end{array}
\right]
$$

Moreover, any $m\times n$ bidiagonal matrix $B$ with rank $r\le\min(m,n)$ is of the form 
$$
B=\left[ 
\begin{array}{cc}
M & 0 \\ 
0 & 0
\end{array}
\right]
$$
where $M$ is either an $r\times r$ square matrix or an $r\times
\left( r+1\right) $ matrix.  It is straightforward to show that 
$$
B^{i}=\left[ 
\begin{array}{cc}
M^{i} & 0 \\ 
0 & 0
\end{array}
\right] , \quad  i=D,\#,\dagger
$$
Thus, without loss of generality, we assume that $B$ is an $r\times \left(
r+1\right) $ bidiagonal matrix.  The diagonal and superdiagonal
coefficients are denoted $\alpha_{j},\beta_{j}$ respectively, $j=1,\ldots
,m.$ If $B$ is extracted from a bidiagonal matrix with $r=n,$ then we
let $\beta_{r}=0.$ As a result, \emph{bidiagonal deflation via Givens
rotations }can be used to show the following:

\begin{theorem}\label{Theorem1}
    If $B$ is an $m\times \left( m+1\right) $ bidiagonal
matrix, then there exists orthogonal matrices $U_{R},V_{R}$ such that 
$$
B=U_{R}\left[ 
\begin{array}{cc}
C & 0 \\ 
0 & K
\end{array}
\right] V_{R}^{T}
$$
where $C$ is invertible bidiagonal and where $K$ is superdiagonal.  
\end{theorem}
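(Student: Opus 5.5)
We argue by induction on $m$, using Givens rotations to deflate each zero diagonal entry of $B$. Write $B$ with diagonal entries $\alpha_1,\ldots,\alpha_m$ and superdiagonal entries $\beta_1,\ldots,\beta_m$, so that $B$ has entries $(j,j)=\alpha_j$ and $(j,j+1)=\beta_j$. The block-diagonal observation above lets us assume every $\beta_j\neq 0$: if some $\beta_j=0$, then $B$ splits into independent bidiagonal blocks. Each block is treated separately, and the results are reassembled by direct sums of the orthogonal factors, followed by permutations that collect the $C$-parts and the $K$-parts. Permutation matrices are orthogonal, so they can be absorbed into $U_R$ and $V_R$.

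\textbf{Case of no zero diagonal.} If every $\alpha_j\neq 0$, then the leading $m\times m$ block is invertible upper bidiagonal. We still need to split off the last column. Apply right Givens rotations in the column planes $(m,m+1)$, then $(m-1,m)$, and so on up to $(1,2)$. The rotation in plane $(j,j+1)$ zeroes the entry in row $j$, column $j+1$ after the previous step has pushed it there. This is the standard chase that turns an $m\times(m+1)$ upper bidiagonal matrix into $[\,L\;\;0\,]$ with $L$ lower bidiagonal. Left rotations then restore upper bidiagonal form, with the fill-in kept inside the first $m$ columns. The result has the form $[\,C\;\;0\,]$. Here $C$ is invertible because $B$ has full row rank $m$. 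The matrix $K$ is the empty $0\times 1$ superdiagonal block.

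\textbf{Deflation step.} Suppose some $\alpha_k=0$. Row $k$ of $B$ is then $\beta_k e_{k+1}^T$. Left Givens rotations in row planes $(k,k+1),(k,k+2),\ldots$ use the entry $\beta_k$ to annihilate $\alpha_{k+1}$, which lies in the same column $k+1$. This creates fill-in at position $(k,k+2)$, which is then rotated against $\alpha_{k+2}$, and so on. This is exactly Golub--Kahan zero-chasing across the row, and it ends at column $m+1$. Alternatively, right rotations chase the entries of column $k$ upward. Either way, after the chase the matrix separates. The first $k-1$ rows form an upper-left bidiagonal block supported on columns $1..k$, which has size $(k-1)\times k$. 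The remaining rows form a second bidiagonal block, which contains a single row with a zero diagonal and a nonzero superdiagonal entry. A permutation moves that row and its column into a trailing position, where it contributes a $1\times 2$ superdiagonal piece $[\,0\;\;\beta\,]$. Applying the induction hypothesis to the two smaller $m'\times(m'+1)$ or square bidiagonal blocks, and assembling the results, gives $C$ as a direct sum of invertible bidiagonal blocks, which is itself invertible bidiagonal. It gives $K$ as a direct sum of superdiagonal pieces, which is superdiagonal because each piece has all its nonzeros in superdiagonal positions once the pieces are ordered.

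\textbf{Main obstacle.} The delicate step is the bookkeeping in the deflation chase. We must verify that the rotations keep the bidiagonal structure, with fill-in confined to one bulge that gets chased out. We must also show that the final zero/nonzero pattern really separates into an invertible bidiagonal part and a purely superdiagonal part, and that the superdiagonal pieces concatenate to an $r'\times(r'+1)$ matrix rather than a square one. I would handle this with an invariant. After each rotation, the matrix is upper bidiagonal except for exactly one nonzero off-band entry, positioned in row $k$ and immediately to the right of the current column. I would prove this invariant carefully and then read off the block structure at the end of the chase.
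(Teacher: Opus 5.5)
Your overall plan (split at zero $\beta_j$, deflate zero diagonal entries with Givens rotations, induct) is the paper's. However, both chases you actually specify fail as written, and the structure you read off after deflation is not what a chase produces.

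\textbf{The full-diagonal case.} The rotation order is backwards. The first rotation, in columns $(m,m+1)$, writes $\pm s\beta_{m-1}\neq 0$ into position $(m-1,m+1)$. The later rotations in planes $(j,j+1)$ with $j<m$ never touch column $m+1$, so the last column never clears. The chase that gives $[\,L\;\;0\,]$ runs top-down through $(1,2),(2,3),\ldots,(m,m+1)$. Each of those rotations kills the original $\beta_j$ at $(j,j+1)$ and creates fill only at $(j+1,j)$.

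\textbf{The deflation step.} The annihilation target is reversed. If you use $\beta_k$ to annihilate $\alpha_{k+1}$, row $k$ keeps a nonzero at $(k,k+1)$ beside the bulge at $(k,k+2)$. Any nontrivial rotation of rows $k$ and $k+2$ then puts a multiple of that entry at $(k+2,k+1)$, below the diagonal. So the invariant you intend to prove already fails at the second rotation. The Golub--Kahan step, which is what the paper uses, annihilates $\beta_k$ with $\alpha_{k+1}$, so that row $k$ carries only the bulge.

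Even with the correct chase, in an $m\times(m+1)$ matrix the bulge stops at $(k,m+1)$. Row $k$ does not split off as a $1\times 2$ piece $[\,0\;\;\beta\,]$. Once moved to the bottom, it becomes the last diagonal entry of a square upper bidiagonal block on columns $k+1,\ldots,m+1$. Several $1\times 2$ pieces could not coexist anyway: together they form a $q\times 2q$ block, while $K$ must be $(m-r)\times(m-r+1)$ because $C$ is square.

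\textbf{The missing observation.} Once every $\beta_j\neq 0$, columns $2,\ldots,m+1$ of $B$ form a lower bidiagonal matrix with diagonal $\beta_1,\ldots,\beta_m$. So $B$ has full row rank whatever the $\alpha_j$ are, and a zero $\alpha_k$ needs no chase at all. Column $k$ meets only row $k-1$, so $B=\mathrm{diag}(B_1,L)$. Here $B_1$ has size $(k-1)\times k$, and $L$ is the invertible lower bidiagonal block on rows $k,\ldots,m$ and columns $k+1,\ldots,m+1$. Alternatively, the top-down column chase, which degenerates to a swap at a zero pivot, reduces the whole block to $[\,L'\;\;0\,]$ with $L'$ invertible.

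Zero diagonal entries only matter in the \emph{square} blocks created by splitting at $\beta_j=0$. Your induction hypothesis is stated only for $m\times(m+1)$ matrices, so it does not cover them. You need a companion statement for square $n\times n$ blocks. There a zero $\alpha_k$ splits the block into two pieces:
\begin{itemize}
\item a wide $(k-1)\times k$ piece of full row rank;
\item a tall $(n-k+1)\times(n-k)$ lower bidiagonal piece of full column rank.
\end{itemize}
Alternatively, as in the paper, the row chase now runs off the last column and leaves a zero row to be permuted down. One chase per wide or tall piece then leaves invertible bidiagonal blocks plus zero rows and columns. The invertible blocks must be brought to a common orientation; left rotations, or conjugation by the reversal permutation, turn lower bidiagonal into upper bidiagonal. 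So $K$ comes out as a zero matrix, trivially superdiagonal, and the ordering issue you raise for $K$ disappears.
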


\begin{proof}
For more insight into deflation, we refer the reader to section 8.6 of \cite{golub2013matrix} where the algorithm is explained
in detail.  Here our proof simply demonstrates that $U_{R},V_{R}$ are
products of Given's rotations $G_{j,k}\left( \theta \right) ,$ which are
rotations through angle $\theta $ of rows $j$ and $k$ with all other rows
fixed. The $j^{th}$ row of $B$ is of the form 
$$
\mathbf{R}_{j}=\left[ 0\ldots 0,\alpha_{j},\beta_{j},\ldots 0\right]  
$$
Suppose $\alpha_{j}=0$ but that $\alpha_{\nu }\neq 0$ for $\nu =j+1,\ldots
,\ell .$ Suppose also that $\alpha_{\nu }=0$ for all $\nu >\ell .$  For
some choice of $c,s\in \mathbb{R}$ such that $c^{2}+s^{2}=1,$ a Given's
rotation $G_{j,j+1}\left( \theta_{1}\right) $ produces a new row $j$ of
$$
\mathbf{R}_{j}^{\prime }=c\mathbf{R}_{j}-s\mathbf{R}_{j+1}=\left[ 0\ldots
0,c\alpha_{j},c\beta_{j}-s\alpha_{j+1},s\beta_{j+1,}\ldots 0\right]
$$
and $c,s$ can be chosen such that $\mathbf{R}_{j}^{\prime }$ only has at
most non-zero coefficient $s\beta_{j+1}$ in column $j+2$ (the
``bulge'' ). The new $j+1$ row is 
$$
\mathbf{R}_{j+1}^{\prime }=s\mathbf{R}_{j}+c\mathbf{R}_{j+1}=\left[ 0\ldots
0,s\alpha_{j},c\alpha_{j+1}+s\beta_{j},c\beta_{j+1},\ldots 0\right]
$$
which preserves bidiagonality since $\alpha_{j}=0.$  A Given's rotation $
G_{j,j+2}\left( \theta_{2}\right) $ moves the bulge in row $j$ to index $
j+3,$ and so on.  The bidiagonal structure is thus preserved in rows where $
\alpha_{k}\neq 0.$  We continue ``chasing
right'' until the bulge is pushed to column $m+2$, and then
we permute row $j$ to the bottom.  If $\beta_{j-1}=0,$ then the $j^{th}$
column is a column of zeros which can be permuted to the final $m+2$ column
(which is dropped). If $\beta_{j-1}\neq 0,$ then we ``chase
up'' with Given's rotations until the $j^{th}$ column is
all zeros.  We continue until we obtain a matrix $B^{\prime }$ for which $
\alpha_{j}^{\prime }\neq 0$ for $j<r$ and $\alpha_{j}=0$ for all $j\geq r$
for some non-negative integer $r.$  If $\beta_{r}^{\prime }=0,$ then we
have the desired block structure.  If not, then we chase $\beta_{r}^{\prime }$ up until the $r+1$ column is a column of zeros. 
\end{proof}
   
Thus, if $A\in \mathbb{R}^{m\times n},$ then there exists $U\in \mathbb{R}^{m\times m}$ and $V\in \mathbb{R}^{n\times n}$ such that 
$$
A=U\left[ 
\begin{array}{cc}
C & 0 \\ 
0 & K
\end{array}
\right] V^{T}
$$
where $C$ is invertible upper bidiagonal and where $K$ is superdiagonal.  The
Moore-Penrose pseudoinverse is subsequently 
$$
A\dg =V\left[ 
\begin{array}{cc}
C^{-1} & 0 \\ 
0 & K\dg 
\end{array}
\right] U^{T}
$$
where $K\dg = K^{-T}$ is the subdiagonal matrix of reciprocals of non-zero super
diagonal coefficients of $K.$ Since $C$ is invertible bidiagonal, there is a formula
for calculating $C^{-1}$ or alternatively, the action of $C^{-1}$ can be
implemented via back-substitution \cite{higham2023power}. 

There also ``in-place'' methods for
computing a pseudoinverse of an $r\times \left( r+1\right) $ bidiagonal
matrix $B$ with diagonal and superdiagonal coefficients $\alpha_{j},\beta_{j}$ for $j=1,\ldots ,r.$  Superdiagonal coefficients $\beta_{j}=0$
split $B$ into a block diagonal matrix $B=diag\left( B_{1},\ldots ,B_{\ell
}\right) $. Thus, we only need $B\dg $ for blocks of the form 
$$
B_{k}=\left[ 
\begin{array}{cccc}
a_{k1} & b_{k1} &  &  \\ 
& \ddots & \ddots &  \\ 
&  & a_{k,m_{k}-1} & b_{k,m_{k}-1} \\ 
&  &  & a_{k,m_{k}\phantom{+1}}
\end{array}
\right]
$$
where $b_{kj}\neq 0$ for all $j=1,\ldots ,m_{k}-1.$ Moreover, diagonal
coefficients $a_{kj}=0$ split a block $B_{k}$ into a block diagonal matrix with often non-square diagonal blocks 
$M_{ki},$ $i=1,\dots ,\nu_{k}$.  That is, $B_k$ blocks are of the form 
$$
B_{k}=\left[ 
\begin{array}{cccc}
M_{k1} & 0 & \ldots & 0 \\ 
0 & M_{k2} & \ddots & \vdots \\ 
\vdots & \ddots & \ddots & 0 \\ 
0 & \ldots & 0 & M_{kv_{k}}
\end{array}
\right]
$$
Each block $M_{ki}$ is in one of only 3 possible forms:
\begin{enumerate}
\item Square $\mu \times \mu $ lower bidiagonal with non-zero diagonal
coefficients from the superdiagonal of $B_{k}.$

\item Rectangular with $\mu $ rows and $\mu +1$ columns, which can be
written in the form $\left[ a,R\right] $ where $a$ is the first column and $
R $ is invertible lower bidiagonal with non-zero diagonal coefficients from
the superdiagonal of $B_{k}.$

\item Rectangular with $\mu +1$ rows and $\mu $ columns, which can be
written in the form $\left[ R,a\right]^{T}$ where $a$ is the last row and $
R $ is invertible bidiagonal with non-zero diagonal coefficients from the
superdiagonal of $B_{k}.$
\end{enumerate}

\begin{figure}[p!]
    \centering
    \includegraphics[width=\textwidth, trim=2cm 7cm 2cm 3cm ]{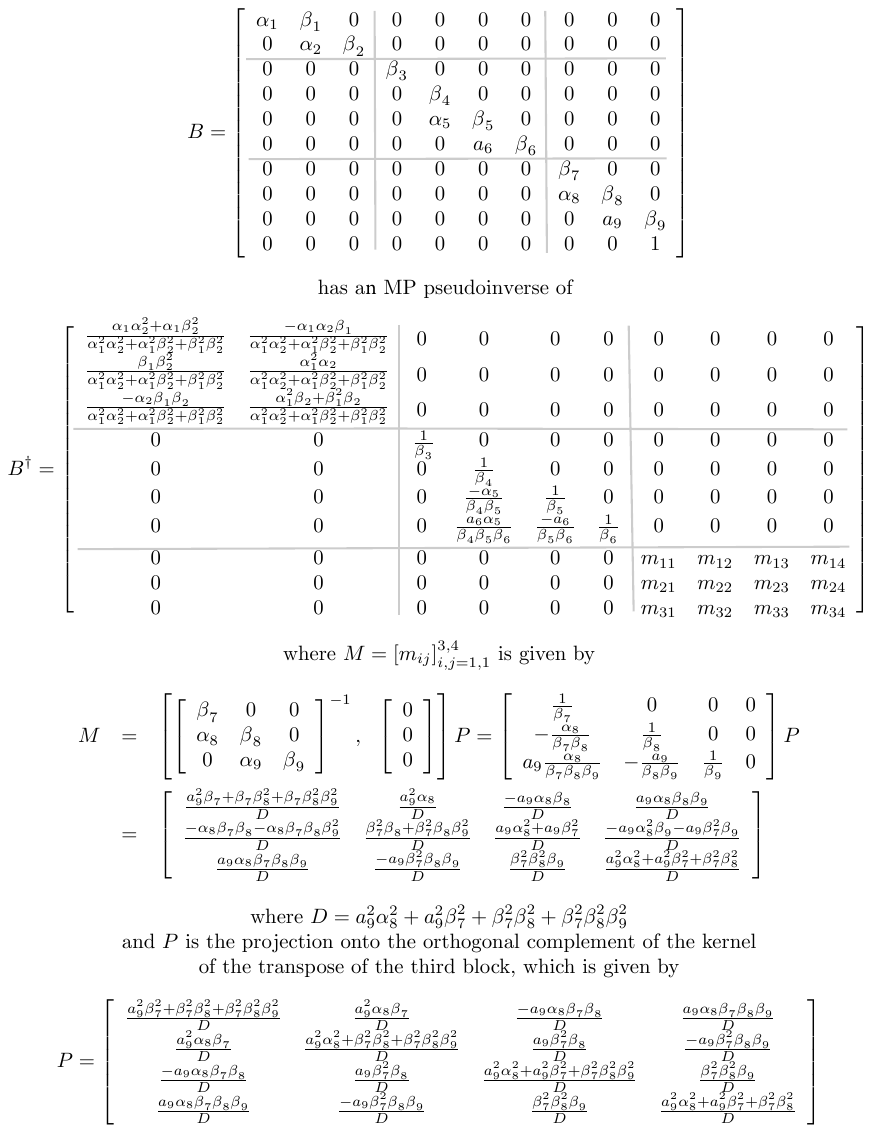}
    \caption{The in-place MP pseudoinverse of the bidiagonal matrix $B$}
    \label{Figure1}
\end{figure}

\noindent To illustrate, the bidiagonal matrix in Figure \ref{Figure1} has been
partitioned into these three forms.  The pseudo-inverses of each type of block can be produced explicitly in no
more than $3\mu $ operations per block. For example, assuming the
coefficients $\alpha_{j},$ $\beta_{j}$ are non-zero in Figure \ref{Figure1}, the
Moore-Penrose pseudo-inverse of $B$ is as shown in Figure \ref{Figure1}.

Specifically, after splitting at
the coefficients $\beta_{j}=0,$, we split again at the coefficients $\alpha_{j}=0$
to obtain a block matrix whose diagonal is made up of $\mu \times \mu $
square invertible bidiagonal matrices or rectangular matrices with 
one more row than column or one more column than row.  That is, a $\mu
\times \mu $ block is of the form 
$$
B_{k}=\left[ 
\begin{array}{cccc}
\beta_{j} &  &  &  \\ 
\alpha_{j+1} & \beta_{j+1} &  &  \\ 
& \ddots & \ddots &  \\ 
&  & \alpha_{\ell } & \beta_{\ell }
\end{array}
\right]
$$
and the inverse of $B_{k}$ in closed form is \cite{higham2023power}
$$
\left( B_{k}^{-1}\right)_{i,j}=\frac{1}{\beta_{j}}\prod_{\nu
=j}^{i-1}\left( \frac{-\alpha_{\nu +1}}{\beta_{\nu }}\right) ,   i\geq j
$$
A rectangular $\mu \times \left( \mu +1\right) $ block is of the form $B_{k}=
\left[ a,R\right] $ where $R$ is an invertible lower bidiagonal matrix. Its
Moore-Penrose pseudo inverse is of the form 
$$
B_{k}\dg =P\left[ 
\begin{array}{c}
R^{-1} \\ 
0
\end{array}
\right]
$$
where $P$ is the orthogonal projection onto the orthogonal complement onto 
$$
\ker \left( B_{k}\right) =\left\{ c\left[ 
\begin{array}{c}
-1 \\ 
R^{-1}a
\end{array}
\right]   |  c\in \mathbb{R}\right\}
$$
The projection $P$ can be calculated explicitly, thus leading to the
following proposition: 

\begin{prop}
    If $B_{k}=\left[ a,R\right] $ is a $\mu \times \left( \mu
+1\right) $ matrix where $a\in \mathbb{R}^{\mu }$ and $R\in \mathbb{R}^{\mu
\times \mu }$ is invertible, then 
$$ B_{k}\dg =\left[ \begin{array}{c} cv^{T} \\ 
R^{-1}-czv^{T}
\end{array}
\right]
$$
where $z=R^{-1}a,$ $c=\left( 1+z^{T}z\right)^{-1},$ and $v^{T}=z^{T}R^{-1}.$
\end{prop}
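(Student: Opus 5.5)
Since $R$ is invertible, $B_k=[a,R]$ has full row rank $\mu$, so $B_k B_k^T$ is invertible and $B_k\dg = B_k^T(B_kB_k^T)^{-1}$. I would not compute $(B_kB_k^T)^{-1}$ head-on. Instead I would follow the projection description given just before the statement: $B_k\dg = P\left[\begin{array}{c} 0 \\ R^{-1}\end{array}\right]$. (The right inverse of $[a,R]$ that vanishes on the first coordinate places $R^{-1}$ in the bottom $\mu$ rows; the display before the proposition writes it on top, but the bottom placement is what $[a,R]$ requires.) Here $P = I - ww^T/(w^Tw)$ is the orthogonal projection onto $\ker(B_k)^\perp$, with $w=\left[\begin{array}{c} -1 \\ z\end{array}\right]$ and $z=R^{-1}a$.

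The justification runs as follows. Let $X_0=\left[\begin{array}{c} 0 \\ R^{-1}\end{array}\right]$. Then $B_kX_0 = a\cdot 0 + RR^{-1} = I_\mu$. The Moore--Penrose inverse of a full-row-rank matrix is the unique right inverse whose range lies in $\mathrm{ran}(B_k^T)=\ker(B_k)^\perp$. The matrix $PX_0$ is still a right inverse, because $B_kP=B_k$ (as $P$ only removes a kernel component), and its range lies in $\ker(B_k)^\perp$. Hence $B_k\dg = PX_0$. Alternatively, one can verify the four Penrose conditions in Table \ref{Table1} directly. In that route $B_kB_k\dg=I$ settles conditions 1 and 3, and $B_k\dg B_k = P$ settles conditions 2 and 4.

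Next comes the explicit computation. Since $w^Tw = 1+z^Tz = 1/c$,
$$
PX_0 = X_0 - c\, w\,(w^TX_0), \qquad w^TX_0 = [-1,\,z^T]\left[\begin{array}{c} 0 \\ R^{-1}\end{array}\right] = z^TR^{-1} = v^T .
$$
The two blocks follow from this:
\begin{itemize}
\item The first row is $0 - c(-1)v^T = cv^T$.
\item The bottom block is $R^{-1} - c\,z v^T$.
\end{itemize}
This matches the claimed formula.

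The only delicate step is confirming the kernel. We have $B_kw = -a + RR^{-1}a = 0$, and $\dim\ker B_k = (\mu+1)-\mu = 1$, so $\ker B_k=\mathrm{span}\{w\}$. From this, $B_kP=B_k$ and $\mathrm{ran}(P)=\ker(B_k)^\perp$ follow immediately. Everything else is a rank-one update. I expect no real obstacle beyond keeping the block ordering consistent with $[a,R]$: the first coordinate corresponds to the column $a$, which is why $cv^T$ sits in the top row.
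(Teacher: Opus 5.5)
Your argument is correct, but it takes a different route from the paper's proof. The paper never derives the formula. It multiplies out $B_kB_k^{\dagger}$ and $B_k^{\dagger}B_k$ blockwise, and uses $v^Ta=z^Tz$, $v^TR=z^T$, $Rz=a$ and $1-cz^Tz=c$ to bring $B_k^{\dagger}B_k$ to the visibly symmetric form $\left[\begin{array}{cc} cz^Tz & cz^T \\ cz & I-czz^T\end{array}\right]$. It then checks $B_kB_k^{\dagger}B_k=B_k$ and dismisses the rest as ``similar.'' You instead construct the answer. You start from the particular right inverse $X_0=\left[\begin{array}{c} 0 \\ R^{-1}\end{array}\right]$, project its columns onto $\ker(B_k)^{\perp}=\mathrm{ran}(B_k^T)$, and use the fact that $B_k^{\dagger}$ is the unique right inverse with range in $\mathrm{ran}(B_k^T)$. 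The formula then drops out of a single rank-one update.

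Your route explains where $c$, $z$ and $v$ come from. It also gives the identities $B_kB_k^{\dagger}=I_\mu$ and $B_k^{\dagger}B_k=P=I-c\,ww^T$; the latter coincides with the paper's block matrix because $1-c=cz^Tz$. So all four Penrose conditions are immediate, whereas the paper carries out only conditions 1 and 4 explicitly. The paper's route, in turn, is a self-contained check that needs neither the full-row-rank characterization nor the rank--nullity identification of $\ker B_k$.

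If you keep the alternative Penrose-condition route, add one line justifying $B_k^{\dagger}B_k=P$. The matrix $X_0B_k=\left[\begin{array}{cc} 0 & 0 \\ z & I\end{array}\right]$ is idempotent with kernel $\mathrm{span}\{w\}=\ker P$, hence $PX_0B_k=P$.

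Your remark on block ordering is also correct. The display preceding the proposition should read $P\left[\begin{array}{c} 0 \\ R^{-1}\end{array}\right]$, which matches $cv^T$ sitting in the top row of the stated formula.
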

\begin{proof}
First, we notice that 
$$
B_{k}B_{k}\dg =
\begin{array}{c}
\left[ 
\begin{array}{cc}
a & R
\end{array}
\right] \\ 
\end{array}
\left[ 
\begin{array}{c}
cv^{T} \\ 
R^{-1}-czv^{T}
\end{array}
\right] =\frac{av^{T}}{1+z^{T}z}+I-\frac{Rzv^{T}}{1+z^{T}z}
$$
and that 
$$
B_{k}\dg B_{k}=\left[ 
\begin{array}{c}
cv^{T} \\ 
R^{-1}-czv^{T}
\end{array}
\right] 
\begin{array}{c}
\left[ 
\begin{array}{cc}
a & R
\end{array}
\right] \\ 
\end{array}
=\left[ 
\begin{array}{cc}
cv^{T}a & cv^{T}R \\ 
R^{-1}a-czv^{T}a & I-czv^{T}R
\end{array}
\right]
$$
However, $v^{T}a=z^{T}R^{-1}a=z^{T}z$ and $v^{T}R=z^{T}.$  Thus, we have 
$$
B_{k}\dg B_{k}=\left[ 
\begin{array}{cc}
cz^{T}z & cz^{T} \\ 
z-czz^{T}z & I-czz^{T}
\end{array}
\right]
$$
But $\left( 1+z^{T}z\right) c=1$ implies that $1-z^{T}zc=c.$ Thus, 
$$
B_{k}\dg B_{k}=\left[ 
\begin{array}{cc}
cz^{T}z & cz^{T} \\ 
cz & I-czz^{T}
\end{array}
\right]
$$
which is self-adjoint.  Moreover, 
$$
B_{k}B_{k}\dg B_{k}=
\begin{array}{c}
\left[ 
\begin{array}{cc}
a & R
\end{array}
\right] \\ 
\end{array}
\left[ 
\begin{array}{cc}
cz^{T}z & cz^{T} \\ 
cz & I-czz^{T}
\end{array}
\right] =
\begin{array}{c}
\left[ 
\begin{array}{cc}
cz^{T}za+cRz & acz^{T}+R-cRzz^{T}
\end{array}
\right] \\ 
\end{array}
$$
However, $Rz=a,$ which implies the first coefficient is 
$$
\frac{1}{z^{T}z+1}\left( z^{T}za+a\right) =a
$$
Moreover, $acz^{T}=cRzz^{T},$ thus yielding $B_{k}B_{k}\dg B_{k}=B_{k}.$
 Conditions 2 and 4 are similar.  
\end{proof} 

Alternatively, the Moore-Penrose pseudoinverse $x=B\dg y$ of an $m\times
n$ bidiagonal matrix $B$ applied to a vector $y\in \mathbb{R}^{m+1}$ can be
obtained from either the normal or the dual-normal equations 
$$
B^{T}Bx=B^{T}y\quad or\quad BB^{T}z=b, \;  x=B^{T}z     
$$
If $B$ is $m\times \left( m+1\right) $ bidiagonal, then $BB^{T}$ is an $
m\times m$ tri-diagonal with diagonal coefficients $\alpha_{j}^{2}+\beta_{j}^{2}$ and super-diagonal (and subdiagonal) coefficients $\alpha_{j+1}\beta_{j}.$  Moreover, $BB^{T}$ is block diagonal with blocks
defined where either $\alpha_{j+1}=0$ or $\beta_{j}=0.$  That is, 
$$
BB^{T}=\left[ 
\begin{array}{ccccc}
\alpha_{1}^{2}+\beta_{1}^{2} & \alpha_{2}\beta_{1} &  &  &  \\ 
\alpha_{2}\beta_{1} & \ddots & \ddots &  &  \\ 
& \ddots & \alpha_{j}^{2}+\beta_{j}^{2} & 0 &  \\ 
&  & 0 & \alpha_{j+1}^{2}+\beta_{j+1}^{2} & \ddots \\ 
&  &  & \ddots & \ddots
\end{array}
\right]
$$
Although this approach does not avoid the conditioning issues as discussed
earlier (it results in normal and dual normal equation methods we mentioned earlier), there are still scenarios for which this approach might be highly
advantageous.    

\section{Direct Methods via Lanczos Bidiagonalization}
The decomposition of a bidiagonal matrix $B\in \R^{m\times n}$ into invertible and non-invertible block diagonal components requires a full bidiagonalization $A = U B V^T$.  However, Golub-Kahan Lanczos bidiagonalization of $A\in\R^{m\times n}$ produces a bidiagonal matrix $B_k$ after $k$ iterations such that 
$$A = U_{k+1} B_k V_{k}^T$$
where $U_k$ and $V_k$ have orthonormal columns \cite{simon2000low}. The columns $u_i\in\R^m, v_i\in\R^n$ of $U_k,V_k$, where $i=1,...,k$ are obtained using the Lanczos recurrence relations, 
\begin{eqnarray*} 
    A^Tu_i &=& \alpha_iv_i+\beta_{i-1}v_{i-1} \\
    Av_i &=& \beta_i u_{i+1} + \alpha_i u_i
\end{eqnarray*}
where for a given $y\in \R^n$ we choose $\beta_1\in\R$ such that $\beta_1u_1 = y$ and $u_1$ is a unit vector.  The coefficients $\alpha_i, \beta_i, \; i=1,\ldots,k$ are the diagonal and superdiagonal coefficients, respectively, of the $(k+1)\times k$ \emph{lower} bidiagonal matrix $B_k$.  

Importantly, all the methods in the previous section construct pseudo-inverses that satisfy conditions 2, 3 and 4 in Table \ref{Table1}, which means that convergence of pseudoinverse methods based on the Lanczos approach can be based solely on condition 1 of Table \ref{Table1}.  That motivates the following theorem. 

\begin{theorem} \label{Theorem2}
For $A\in \mathbb{R}^{m\times n},$ suppose for each $k\in \mathbb{Z
}^{+}$ there exists $U_{k}\in \mathbb{R}^{m\times k},$ $V_{k}\in \mathbb{R}^{n\times k}$ such that $U_{k}^{T}U_{k}=V_{k}^{T}V_{k}=I_{k}.$  For $
B_{k}=U_{k}^{T}AV_{k},$ define $R_{k}$ as 
$$A=U_{k}B_{k}V_{k}^{T}+R_{k}$$
and let $X_{k}=V_{k}B_{k}^{\dg }U_{k}^{T}.$  If $R_{k}\rightarrow 0$,
then also $\left\Vert AX_{k}A-A\right\Vert_{F}\rightarrow 0.$   
\end{theorem}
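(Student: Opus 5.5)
The plan is to compare $A$ with its compression $P_k = U_k B_k V_k^T$, so that $A = P_k + R_k$. The first step is the algebraic identity $P_k X_k P_k = P_k$, which is exact:
$$
P_k X_k P_k = U_k B_k V_k^T V_k B_k\dg U_k^T U_k B_k V_k^T = U_k B_k B_k\dg B_k V_k^T = P_k .
$$
It follows from $U_k^TU_k = V_k^TV_k = I_k$ and condition 1 of Table \ref{Table1} applied to $B_k$. Expanding $A X_k A$ with $A = P_k + R_k$ and subtracting $A = P_k + R_k$ then gives
$$
AX_kA - A = P_k X_k R_k + R_k X_k P_k + R_k X_k R_k - R_k .
$$
Everything reduces to showing that each of these four terms tends to $0$ in the Frobenius norm.

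For the two mixed terms I will use the remaining Penrose conditions. Note that $P_kX_k = U_k (B_kB_k\dg) U_k^T$ and $X_kP_k = V_k (B_k\dg B_k) V_k^T$. By conditions 2--4 for $B_k$, each of $B_kB_k\dg$ and $B_k\dg B_k$ is an orthogonal projection. Conjugating by a matrix with orthonormal columns preserves this, so $P_kX_k$ and $X_kP_k$ are orthogonal projections and have spectral norm at most $1$. Using $\|MN\|_F \le \|M\|_2\|N\|_F$, I get
$$
\|P_kX_kR_k\|_F \le \|R_k\|_F, \qquad \|R_kX_kP_k\|_F \le \|R_k\|_F .
$$
The linear term is trivially $\|R_k\|_F$. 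So these three terms are $O(\|R_k\|_F) \to 0$.

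The main obstacle is the quadratic term $R_kX_kR_k$. The natural bound is
$$
\|R_kX_kR_k\|_F \le \|R_k\|_F^2\,\|X_k\|_2 = \|R_k\|_F^2/\sigma_{\min}^+(B_k),
$$
where $\sigma_{\min}^+(B_k)$ is the smallest nonzero singular value of $B_k$. This bound fails unless the nonzero singular values of $B_k$ stay away from $0$.

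To control them I would first apply Weyl's inequality to $A = P_k + R_k$, which gives $|\sigma_j(P_k) - \sigma_j(A)| \le \|R_k\|_2 \to 0$; since $\sigma_j(P_k) = \sigma_j(B_k)$, the leading $r = \mathrm{rank}(A)$ singular values of $B_k$ converge to $\sigma_1,\ldots,\sigma_r > 0$. The remaining singular values of $B_k$ tend to $0$, so the argument needs them to be exactly $0$, that is, $\mathrm{rank}(B_k) = r$ for large $k$. In the Lanczos setting I would try to justify this from the exact breakdown of the recurrence (or deflation as in Theorem \ref{Theorem1}), which gives $\beta_k = 0$ and an exact rank-$r$ block. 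Failing that, I would replace $B_k\dg$ by its rank-$r$ truncation. With $\mathrm{rank}(B_k) = r$ in hand, $\|X_k\|_2 \to 1/\sigma_r$ stays bounded, the quadratic term is $O(\|R_k\|_F^2)$, and the theorem follows. I expect the real work of the proof to lie in this rank/conditioning point rather than in the algebra.
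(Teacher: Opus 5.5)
Your decomposition is the same as the paper's. With $A_k=U_kB_kV_k^T$ (your $P_k$) and $X_k=A_k^{\dagger}$, the paper writes $AX_kA-A=R_kA_k^{\dagger}A_k+A_kA_k^{\dagger}R_k+R_k(A_k^{\dagger}R_k-I)$ and applies the triangle inequality. Your handling of the two mixed terms through the orthogonal projections $U_kB_kB_k^{\dagger}U_k^T$ and $V_kB_k^{\dagger}B_kV_k^T$ is correct.

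\textbf{The gap.} It is exactly where you put it, and your proposal leaves it open. Neither fallback proves the stated theorem. The hypotheses allow arbitrary $U_k,V_k$ with orthonormal columns, so there is no Lanczos recurrence or breakdown to appeal to. Replacing $B_k^{\dagger}$ by a rank-$r$ truncation changes $X_k$, so it proves a different statement. The paper's proof does not resolve this either: it simply asserts that its three-term bound tends to zero with $R_k$. That ignores the factor $\|A_k^{\dagger}\|_2$ hidden in $R_kA_k^{\dagger}R_k$, so your suspicion was well founded.

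\textbf{The missing observation.} No extra structure is needed. Since $B_k=U_k^TAV_k$, you always have $\mathrm{rank}(B_k)\le\mathrm{rank}(A)=r$, so $\sigma_j(B_k)=0$ for $j>r$ automatically. Your Weyl estimate gives $\sigma_r(B_k)\ge\sigma_r(A)-\|R_k\|_2\ge\sigma_r(A)/2$ as soon as $\|R_k\|_2\le\sigma_r(A)/2$. Hence for all large $k$:
\begin{itemize}
\item the rank of $B_k$ is exactly $r$;
\item $\|X_k\|_2=\|B_k^{\dagger}\|_2\le 2/\sigma_r(A)$;
\item $\|R_kX_kR_k\|_F\le\|X_k\|_2\|R_k\|_F^2\le 2\|R_k\|_F^2/\sigma_r(A)\to 0$.
\end{itemize}
The case $A=0$ is trivial. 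With this line inserted your argument is complete, and it is more rigorous than the paper's at precisely the step that matters.
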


\medskip

\noindent Before we prove our theorem, we first establish the following lemma:

\begin{lemma}
    Let $A_{k}=U_{k}B_{k}V_{k}^{T}.$ Then $X_{k}=A_{k}\dg .$
\end{lemma}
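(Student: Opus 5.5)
The plan is to verify directly that $X_k = V_k B_k^{\dg} U_k^T$ satisfies the four defining properties in Table \ref{Table1} for $A_k = U_k B_k V_k^T$, and then appeal to uniqueness of the Moore-Penrose pseudoinverse. The only facts needed are $U_k^T U_k = V_k^T V_k = I_k$, which lets the inner factors collapse in products, and the four Penrose identities for $B_k^{\dg}$ itself. This is the same mechanism behind the identity $\left(UBV^T\right)\dg = VB\dg U^T$ used earlier for square orthogonal $U,V$. The one point to watch is that $U_k, V_k$ here have only orthonormal columns, so $U_kU_k^T$ and $V_kV_k^T$ are not identities.

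For conditions 1 and 2, I expand
$$A_k X_k A_k = U_k B_k (V_k^T V_k) B_k^{\dg} (U_k^T U_k) B_k V_k^T = U_k B_k B_k^{\dg} B_k V_k^T = U_k B_k V_k^T = A_k .$$
In the same way,
$$X_k A_k X_k = V_k B_k^{\dg} B_k B_k^{\dg} U_k^T = X_k .$$

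For conditions 3 and 4, I compute $A_k X_k = U_k (B_k B_k^{\dg}) U_k^T$. This is symmetric because $B_k B_k^{\dg}$ is symmetric and it is conjugated by $U_k$ and $U_k^T$. Likewise $X_k A_k = V_k (B_k^{\dg} B_k) V_k^T$ is symmetric. All four properties hold, so by uniqueness $X_k = A_k^{\dg}$.

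The only step needing care, and hence the main obstacle, is ensuring that no simplification silently uses $U_kU_k^T = I_m$ or $V_kV_k^T = I_n$. In each product above, the only contractions are of the form $V_k^TV_k$ or $U_k^TU_k$, which sit between $B_k$ and $B_k^{\dg}$. The outer factors $U_k, U_k^T$ or $V_k, V_k^T$ are never required to cancel against each other. Once this bookkeeping is checked, the lemma follows immediately.
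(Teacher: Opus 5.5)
Your proposal is correct and follows the paper's route: verify the four Penrose conditions for $X_k = V_k B_k^{\dagger} U_k^T$ using only $U_k^TU_k = V_k^TV_k = I_k$ and the Penrose identities for $B_k^{\dagger}$, then conclude by uniqueness. You also write out the conditions the paper dismisses as ``similar,'' and you correctly obtain $X_kA_kX_k = X_k$ where the paper's text has the typo $X_kA_kX_k = A_k$.
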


\begin{proof}(lemma): Let $A_{k}=U_{k}B_{k}V_{k}^{T}.$  Then 
$$ A_{k}X_{k}=U_{k}B_{k}V_{k}^{T}\left( V_{k}B_{k}^{\dg }U_{k}^{T}\right)
=U_{k}B_{k}B_{k}\dg U_{k}^{T} $$
which is self-adjoint.  Moreover, 
\begin{eqnarray*}
A_{k}X_{k}A_{k} &=&U_{k}B_{k}B_{k}\dg U_{k}^{T}U_{k}B_{k}V_{k}^{T} \\
&=&U_{k}B_{k}B_{k}\dg B_{k}V_{k}^{T}=U_{k}B_{k}V_{k}^{T}=A_{k}
\end{eqnarray*}
and similarly, $X_{k}A_{k}$ is self adjoint and $X_{k}A_{k}X_{k}=A_{k}$. 
\end{proof}

\medskip

\noindent Now the proof of the theorem: 
\begin{proof}
Let $P_{k}=U_{k}U_{k}^{T}$ and $Q_{k}=V_{k}V_{k}^{T}.$ Then 
$P_{k},Q_{k}$ are orthogonal projections and 
$$
P_{k}AQ_{k}=U_{k}\left( U_{k}^{T}AV_{k}\right)
V_{k}^{T}=U_{k}B_{k}V_{k}^{T}=A_{k}
$$
where $A_{k}=U_{k}B_{k}V_{k}^{T}$ as in the lemma. Thus, $
A=P_{k}AQ_{k}+R_{k},$ which implies that 
\begin{eqnarray*}
\left( P_{k}AQ_{k}\right)^{T}R_{k} &=&Q_{k}A^{T}P_{k}\left(
A-P_{k}AQ_{k}\right) \\
&=&Q_{k}A^{T}P_{k}A-Q_{k}A^{T}P_{k}^{2}AQ_{k}
\end{eqnarray*}
It follow that $\left\langle P_{k}AQ_{k},R_{k}\right\rangle_{F}$ is given
by 
$$
\tr\left( \left( P_{k}AQ_{k}\right)^{T}R_{k}\right) =\tr\left(
Q_{k}A^{T}P_{k}A\right) -\tr\left( Q_{k}A^{T}P_{k}AQ_{k}\right)
$$
But $\tr\left( Q_{k}A^{T}P_{k}AQ_{k}\right) =\tr\left(
Q_{k}^{2}A^{T}P_{k}A\right) $ and $Q_{k}^{2}=Q_{k}.$  Thus, $\left\langle
P_{k}AQ_{k},R_{k}\right\rangle_{F}=0,$ or equivalently, $\left\langle
A_{k},R_{k}\right\rangle_{F}=0$, which implies that
$$
\left\Vert A\right\Vert_{F}^{2}=\left\Vert A_{k}\right\Vert_{F}^{2}+\left\Vert R_{k}\right\Vert_{F}^{2}  \label{sumsquares}
$$

It follows that 
\begin{eqnarray*}
AX_{k}A-A &=&\left( A_{k}+R_{k}\right) A_{k}\dg \left(
A_{k}+R_{k}\right) -A_{k}-R_{k} \\
&=&A_{k}A_{k}\dg A_{k}+R_{k}A_{k}\dg A_{k}+A_{k}A_{k}^{\dag
}R_{k}+R_{k}A_{k}\dg R_{k} -A_{k}-R_{k} \\
&=&R_{k}A_{k}\dg A_{k}+A_{k}A_{k}\dg R_{k}+R_{k}A_{k}^{\dag
}R_{k}-R_{k} \\
&=&R_{k}A_{k}\dg A_{k}+A_{k}A_{k}\dg R_{k}+R_{k}\left( A_{k}^{\dag
}R_{k}-I\right)
\end{eqnarray*}
so that the Frobenius norm yields 
\begin{equation}
\left\Vert AX_{k}A-A\right\Vert_{F}\leq \left\Vert R_{k}A_{k}^{\dag
}A_{k}\right\Vert_{F}+\left\Vert A_{k}A_{k}\dg R_{k}\right\Vert_{F}+\left\Vert R_{k}A_{k}\dg R_{k}-R_{k}\right\Vert_{F}
\label{ProofBound}
\end{equation}
which approaches zero as $R_{k}$ approaches 0.  
\end{proof} 

However, Lanczos bidiagonalization stops if it produces a zero diagonal or subdiagonal coefficient in $B_k$ \cite{golub2013matrix}.  In this case, the Krylov space is invariant and the rank of $A$ has been reached.  Thus, the CK decomposition (Theorem \ref{Theorem1}) is not useful if the exact Lanczos method is utilized.  

Instead, we can ``re-interpret'' Lanczos Bidiagonalization as a direct pseudoinverse method (indeed, a reinterpretation of the LSQR least squares solver \cite{paige1982lsqr}).  In particular, after each iteration $k$, we let $X_k = V_k B_k^{\dagger} U_k^T$ with the goal that $X_k$ converges to $A\dg$ as $k$ increases.  It is straightforward to show that $AX_k$ and $X_kA$ are symmetric.  It is also straightforward to show that $X_k$ is of the form
\begin{equation}
    X_k = X_{k-1} + w_kz_k^T  \label{Xrecur}
\end{equation} 
for vectors $w_k\in\R^n$ and $z_k\in\R^m$.  Indeed, after the $k^{th}$ iteration of the Lanczos method, we use a Given's rotation  to "zero" out the last bidiagonal coefficient \cite{paige1982lsqr}:
$$\begin{bmatrix} c_k & s_k \\ -s_k & c_k \end{bmatrix} \begin{bmatrix} \bar{\rho}_{k-1} & 0 \\ \beta_k & \alpha_k \end{bmatrix} = \begin{bmatrix} \rho_{k-1} & \theta_k \\ 0 & \bar{\rho}_k \end{bmatrix}$$
The vectors $w_k,z_k$ respectively then satisfy 
$$w_k = \frac{1}{\rho_k}\left(v_k - \theta_k w_{k-1}\right), \quad z_k = c_k u_k - s_k z_{k-1}$$
Additionally, we can track convergence using $\|w_k\|_2$, stopping once it is sufficiently close to 0. 

If we use (\ref{Xrecur}) as our starting point, then this reinterpretation is equivalent to finding the stationary point(s) of  
\begin{equation}
    f(w, z) = \| A X_k - I \|_F^2 = \| R_{k} + A w z^T \|_F^2 \label{functionalwz}
\end{equation}
where $\| \ldots \|_F$ is the Frobenius matrix norm (aka, the \emph{trace} norm).  Moreover, if we let $z=Aw$, then (\ref{Xrecur}) becomes 
$$X_k = X_{k-1} + w_k\left(Aw_k\right)^T = X_{k-1} + w_k w_k^TA^T$$
where we require that $w\ne 0, Aw \ne 0$ (else, recursion is an identity).  Notice that 
$$AX_k = AX_{k-1} + (Aw_k)(Aw_k)^T$$
is symmetric.  Moreover, the functional (\ref{functionalwz}) becomes 
$$f(w) = \| R_k + Aw(Aw)^T \|_F^2 = \|R_k + Aww^TA^T\|_F^2$$
Properties of the trace imply that 
\begin{eqnarray*}
    f(w) &=& \|R_k\|_F^2 + 2\tr\left(R\_k Aww^T A^T\right) + \tr\left(Aww^T A^T Aww^T A^T\right) \\
       &=& \|R_k\|_F^2 + 2 w^T A^T R_k Aw + (w^T A^T Aw)^2\\ 
       &=&\|R_k\|_F^2 + 2 w^T A^T R_k Aw + \|Aw\|_2^4
\end{eqnarray*}
from which the gradient is given by 
$$\nabla_w f(w) = 4 A^T R_k Aw + 4 \|Aw\|_2^2 A^T Aw $$
Setting $\nabla_wf = 0$ and substituting $R_k = AX_{k-1} - I$ yields 
\begin{eqnarray}
    A^T (AX_{k-1} - I) Aw &=& - \|Aw\|_2^2 A^T Aw \nonumber \\
    A^T A X_{k-1} Aw &=& (1 - \|Aw\|_2^2) A^T Aw \label{MainOne}
\end{eqnarray}
Equation (\ref{MainOne}) implies that without loss of generality, we can assume that $Aw$ is a unit vector, thus implying $A^TAX_{k-1}Aw = 0$. Since $\ker(A^TA) = \ker(A)$, we must also have $AX_{k-1}Aw =0.$ The product of $A$ and $X_k$ is  
$$AX_k= AX_{k-1} + (Aw)(Aw)^T,$$ 
which is an orthogonal projection. The result is that
$$AX_kAw = AX_{k-1}Aw + Aw(Aw)^TAw = AX_{k-1}Aw + Aw = Aw$$
Thus, any $w\in \R^n$ such that $Aw$ is a unit vector for which $AX_{k-1}(Aw)=0$ is an argmin of $f(w)$.  This leads to the following theorem: 

\begin{theorem}\label{Theorem3}
    For $w_0\in \R^n$, let $X_0 =w_0(Aw_0)^T$ where $Aw_0$ is a unit vector, and then for $k=1,2,3,... $ let 
    $$X_k = X_{k-1} + w_k (Aw_k)^T$$
    where $w_k \in R^n$ such that $Aw_k$ is a unit vector orthogonal to the columns of $AX_{k-1}$. Then $X_k$ converges to a pseudoinverse satisfying conditions 1,2,3 of Table \ref{Table1}.  
\end{theorem}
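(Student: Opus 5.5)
The plan is to track the vectors $u_j = Aw_j$ ($j=0,1,\dots$) and prove by induction that $\{u_0,\dots,u_k\}$ is orthonormal and that
$$AX_k = \sum_{j=0}^k u_j u_j^T =: P_k$$
is the orthogonal projection onto $\mathrm{span}\{u_0,\dots,u_k\}\subseteq \mathrm{ran}(A)$. The base case is immediate: $AX_0 = (Aw_0)(Aw_0)^T = u_0u_0^T$ with $\|u_0\|_2=1$. For the inductive step, the hypothesis that $u_k$ is a unit vector orthogonal to the columns of $AX_{k-1}=P_{k-1}$ means $u_k \perp u_j$ for $j<k$. Then $AX_k = P_{k-1}+u_ku_k^T$ is again an orthogonal projection, which is the computation displayed just before the theorem. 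This already yields condition 3, since $AX_k$ is symmetric.

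\textbf{Finite termination.} Since the $u_j$ are orthonormal vectors in $\mathrm{ran}(A)$, which has dimension $r=\mathrm{rank}(A)$, at most $r$ steps can be taken. The process therefore stops at $k=r-1$, and it stops exactly when $P_k$ is the projection onto all of $\mathrm{ran}(A)$. Before that point we can always continue: if $\mathrm{span}\{u_j\}\ne \mathrm{ran}(A)$, pick a unit $u\in\mathrm{ran}(A)\cap \mathrm{span}\{u_j\}^{\perp}$ and any $w$ with $Aw=u$. So ``converges'' here means the sequence becomes stationary at a limit $X:=X_{r-1}$ satisfying $AX = AA\dg$, the orthogonal projection onto $\mathrm{ran}(A)$. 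I expect this to be the main interpretive obstacle: the statement has to be read as stabilization at step $r-1$, with the recursion undefined or trivial beyond it.

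\textbf{Conditions 1 and 3.} With $AX=P_{\mathrm{ran}(A)}$, condition 1 follows at once: $AXA = P_{\mathrm{ran}(A)}A = A$. Condition 3 is the symmetry of $AX$ noted above.

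\textbf{Condition 2.} Write $X=\sum_j w_ju_j^T = W U^T$, where $W=[w_0,\dots,w_{r-1}]$ and $U=[u_0,\dots,u_{r-1}]$, so that $AW=U$ and $U^TU=I_r$. Then
$$XAX = WU^T A W U^T = W U^T U U^T = WU^T = X.$$
The key point is that this computation uses only $AW=U$ and the orthonormality of the $u_j$, not any choice of $w_j$ within $w_j+\ker A$. That is exactly why condition 4 is not claimed: $XA = WU^TA$ need not be symmetric unless the $w_j$ are taken in $\mathrm{ran}(A^T)$. I will remark that with that choice the limit is $A\dg$ by the uniqueness argument of Table \ref{Table1}.
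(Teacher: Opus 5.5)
Your proof is correct and follows the paper's argument: condition 1 holds because $AX_{r-1}$ is the orthogonal projection onto $\mathrm{ran}(A)$ once $r$ orthonormal vectors $Aw_j$ have been produced, and condition 3 holds because $AX_k$ is symmetric. The only difference is in condition 2, where the paper expands $X_kAX_k$ inductively and removes the cross terms using $AX_{k-1}Aw_k=0$, whereas your factorization $X=WU^T$ with $AW=U$ and $U^TU=I_r$ gives $XAX=X$ in one line, and it works the same way for every $X_k$.
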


\begin{proof}
    To prove the first condition in Table \ref{Table1}, we notice that once $k=r$ where $r=\mathrm{rank}(A)$, then $Aw_0,\ldots, Aw_{r-1}$ is an orthonormal basis for the range of $A$.  As a result, 
    $$P = AX_{r-1} = \sum_{i=0}^{r-1} \left(Aw_i\right)\left(Aw_i\right)^T$$
    is the orthogonal projection of $\R^m$ onto the range of $A$, which means that $PA=A$.  Thus, $AX_{r-1}A=PA=A$. 

    The second condition was established above.  For the third condition, we first notice that 
    \begin{eqnarray*}
        X_kAX_k &=& X_{k-1}AX_{k-1} + X_{k-1}Aw_k(Aw_k)^T \\
                & & + w_k(Aw_k)^TAX_{k-1} + w_k(Aw_k)^T(Aw_k)(Aw_k)^T
    \end{eqnarray*}
 By our recursive assumption, $X_{k-1}AX_{k-1} = X_{k-1}$. For the second term $X_{k-1}Aw_k(Aw_k)^T$, we note that $AX_{k-1}(Aw_k) = 0$.  Thus, 
 $$X_{k-1}AX_{k-1}(Aw_k) = 0 \quad \implies \quad X_{k-1}Aw_k = 0$$
 since $X_{k-1}AX_{k-1} = X_{k-1}$. Thus, $X_{k-1}Aw_k(Aw_k)^T=0(Aw_k)^T = 0$. The third term is 
 $$w_k(Aw_k)^T(AX_{k-1})= w_k(Aw_k)^T\left(AX_{k-1}\right)^T=w_k\left(AX_{k-1}Aw_k\right)^T = 0$$
 since $AX_{k-1}$ is symmetric. Thus, we now have 
 $$X_kAX_k = X_{k-1}AX_{k-1} + w_k\left(Aw_k\right)^T(Aw_k)(Aw_k)^T = X_{k-1} + \|Aw_k\|^2_2\; w_k(Aw_k)^T$$
 Since $Aw$ is a unit vector, we have $X_kAX_k = X_k$ for all $k=0,1,2,3...$. That is, each iteration $X_k$ satisfies conditions 2,3 in Table \ref{Table1}. 
\end{proof}

A pseudoinverse of $A\in\R^{m\times n}$ satisfying only conditions 1,2,3 in Table \ref{Table1} is known as a \emph{weak pseudoinverse} and is denoted $A^{(1,2,3)}$  \cite{benisrael2003generalized}.  Theorem \ref{Theorem3} does not indicate how the vectors $w_k\in\R^n$ are selected, but there are numerous methods for doing so.  For example, if $A\in\R^{m\times n}$ has rank $r$, then define $X_r$ using following algorithm:
\begin{enumerate}
    \item Generate $\ell$ vectors $v_{1},\ldots,v_{\ell}$ such that $\ell > r$ and 
    $$\mathrm{span}\left(Av_1,\ldots, Av_{\ell}\right) = \mathrm{ran}(A)$$
    \item Find coefficients (Gram-Schmidt, for example) $c_{ij}$ for which 
    $$q_{i} = \sum_{i=1}^{\ell}c_{ij} Av_{j}, \quad i=1,\ldots,r$$
    is an orthonormal basis for $\mathrm{ran}(A)$.
    \item Define $w_i, i=1,\ldots, r$ via the same $c_{ij}$ coefficients (e.g., at each Gram-Schmidt iteration) as 
    $$w_i = \sum_{j=1}^n c_{ij} v_j$$
    \item Let $X_r = w_1q_1^T + \ldots + w_r q_r^T$ 
\end{enumerate}
Theorem \ref{Theorem3} implies that $X_r$ is a (1,2,3) pseudoinverse of $A$, thus making it suitable for application to (\ref{MatrixLS}) and other least squares problems (and it is a straightforward algorithm to write -- great student coding exercise which is why I do not include the code in the implementation). 

If necessary, condition 4 in Table \ref{Table1} can subsequently be obtained using the fact that all weak pseudoinverses are of the form 
$$A^{(1,2,3)} = \left\lbrace X_r + \left(I-X_rA\right)Z\left(AX_r\right) \;\; \middle| \;\; Z\in\R^{n\times m}\right\rbrace$$
where $X_r$ is the weak pseudoinverse of $A\in\R^{m\times n}$ with rank $r$  produced by our greedy algorithm. 

We can also implement condition 4 in each step of an algorithm implementing Theorem \ref{Theorem3} .  For example, we can choose each ``next'' vector $w_{k+1}$ using a 2-term recursion 
\begin{eqnarray*}
    u_k &=& Aw_k - \beta_{k-1}u_{k-1} \\
    w_{k+1} &=& A^Tu_k - \alpha_k w_k
\end{eqnarray*}
where $\alpha_k, \beta_{k-1}$ are from Lanczos bidiagonalization.  That is, this example is indeed a ``reinterpretation'' of the LSQR method for sparse least squares solvers \cite{paige1982lsqr}.

\section{Implementation and Error Bounds}
In the previous sections, we introduced a general approach of
bidiagonalization of a matrix followed by one of 3 possible methods for
calculating the pseudoinverse of a bidiagonal matrix. The LAPACK commands for 
Householder bidiagonalization are gebrd and orgbr.  Although these are
used for the Golub-Kahan computation of the singular value decomposition of a 
matrix, they are not necessarily available in \textbf{scipy.lapack}. However, 
they can be accessed via a Cython wrapper as is demonstrated at 
\newline \url{https://github.com/appmathdoc/DirectPseudoInverseMethods}

We thus assume the bidiagonalization algorithms have been implemented.  As justified above, our focus here is on $r \times (r+1)$ bidiagonal matrices for which there are three approaches:
\begin{enumerate}
\item \textbf{CK decomposition:}  Given's rotations deflation of a bidiagonal matrix into the bidiagonal form 
$$
B=\left[ 
\begin{array}{cc}
C & 0 \\ 
0 & K
\end{array}
\right] \label{CKform}
$$
where $C$ has a non-zero diagonal and $K$ has a zero diagonal.  

\item \textbf{Dual Normal:} Solving the dual normal equations for $B$ using the natural block-diagonal structure of $BB^{T}.$

\item \textbf{In place:} Splitting $B$ into block diagonal form on the coefficients $\beta_{j}=0$ and then splitting each block on coefficients $\alpha_i=0$ into a block with zeros off the
diagonal and with diagonal matrices a mixture of square and rectangular
blocks.
\end{enumerate}

\noindent We also refer to these as the CK method, the dual normal method, and the inplace method, respectively, for the pseudoinverse of a bidiagonal matrix. 

Our experience suggests that the CK method is widely applicable.  Indeed, if $B$ is in the CK form (\ref{CKform}), then its set of singular values is the union of the singular values of $C$ and the singular values of $K$.  The singular values of $K$ are the absolute values of its nonzero coefficients.  Moreover, the nuclear norm of $B$  is 
$$\|B\|_{\ast} = \|C\|_{\ast} + \|K\|_{\ast}$$
The pseudoinverse $K\dg$ is both fast and sparse, so that there is no real advantage to thresholding the superdiagonal of $B$ other than to avoid reciprocals of small numbers.  However, if $\alpha_i,i=1,\ldots,\ell$ and $\beta_i, i=1,\ldots\ell-1$ are the diagonal and superdiagonal coefficients of $C$, then 
$$\sum_{i=1}^{\ell-1}\left|\beta_i\right|\le\|B\|_{\ast}\le\sum_{i=1}^{\ell}\left|\alpha_i\right|+\sum_{i=1}^{\ell-1}\left|\beta_i\right|$$
Thus, while the CK decompsition is not rank-revealing, thresholding diagonal coefficients of $C$ to 0 is equivalent to thresholding smaller singular values of $B$ to 0.  That is, the CK decomposition has the noise thresholding abilities of the SVD based algorithms. 

However, bidiagonalization followed by
deflation is similar to the Golub-Kahan algorithm for the singular value
decomposition.  The complexity of this method is $O\left(
n^{3}\right) $, as is the Golub-Kahan approach itself. While the deflation
step is $O\left( n^{2}\right),$ the required full bidiagonalization is 
$O\left( n^{3}\right) $, which translates into the CK decomposition being only slightly faster than
the use of the SVD itself to calculate Moore-Penrose pseudo-inverses.

Importantly, the pseudo-inverse of a CK decomposition is 
$$B\dg = \begin{bmatrix} C^{-1} & 0 \\ 0 & K\dg \end{bmatrix}$$
If $A$ is sparse, then its bidiagonalization $B$ is sparse because it is bidiagonal.  The pseudoinverse $K\dg$ is also sparse. Although $C^{-1}$ is not sparse, it is the inverse of the sparse bidiagonal matrix $C$ and can be implemented using back-substitution.  That is, CK decomposition does not require the implementation of a dense matrix, and thus, the CK decomposition is applicable to large, sparse matrices.  

In the dual normal method, the tridiagonal matrix $BB^{T}$ and the related dual normal equations can be addressed with a Thomas algorithm \cite{thomas1949elliptic}, which is an $O(n)$ direct solver for tridiagonal systems in which the subdiagonal of each
invertible block on the block diagonal is moved to zero using Gaussian
elimination and then the resulting upper bidiagonal matrix is inverted via back
substitution.  To ameliorate conditioning issues, Woodbury matrix tearing is used to break solving of systems (or inverting of matrices) into smaller subsystem problems which are then recombined \cite{bunch1974partitioning,hager1989updating}. 

The inplace method calculates an explicit pseudo-inverse of each block ``in place'' in the bidiagonal matrix $B.$  Theoretically, the inplace method is equivalent to CK decomposition, yet similar to the dual normal method, the inplace method does not require a full $O\left( n^{3}\right) $
bidiagonalization.  Moreover, Woodbury methods can be used to keep the memory requirements of a pseudoinverse via any of the three methods quite small.  Given an $r\times r$ bidiagonal matrix $B$ (notice - this derivation is for square $B$), we can uniformly tear $B$ into a block-diagonal matrix $\hat B$ with $p\times p$ blocks.  Let matrices $U,V$ be highly sparse matrices for which 
$$B = \hat B + UV^T$$
where the coefficients of $U,V$ are the severed superdiagonal coefficients $\beta_k$ and their positional basis vectors.  The Woodbury identity in this situation is 
\begin{equation}
    B^{-1} = \hat B^{-1} - \hat B^{-1}U\left( I + V^T \hat B^{-1} U\right)^{-1}V^T B^{-1}  \label{WoodIdent}
\end{equation} 
However, for a tear at row $k$ and column $k+1$, the matrix $U$ targets column $k$ while the matrix $V^T$ targets row $k+1$. But $\hat B^{-1}$ is block-diagonal and equal to 0 at row $k+1$ and column $k$.  Thus, $V^T\hat B^{-1} U = 0$ and the Woodbury identity (\ref{WoodIdent}) simplifies to 
\begin{equation} B^{-1} = \hat B^{-1} - \hat B^{-1} UV^T \hat{B}^{-1} \label{ridis} \end{equation}
Storage requirements are reduced from $O\left(r^2\right)$ to $O(r)$.  However, (\ref{ridis}) is not suitable for all applications.  If conditioning is poor or finite arithmetic problems arise, then the traditional Thomas algorithm for symmetric tridiagonal matrices might be more appropriate.

Implementations of these methods can be found at\newline \url{https://github.com/appmathdoc/DirectPseudoInverseMethods}
\newline The python implementations at that site are designed to 
\begin{itemize}
    \item avoid dense inverse matrices and thus dense MP Pseudoinverses (by mapping inverses to $O(n^2)$ back substitution implementations)
    \item be as fast or faster than traditional SVD based methods
\end{itemize}
All three methods rely on Bidiagonalization (either Golub-Kahan or Lanczos) to reduce the computation to a bidiagonal matrix.

Method 0 is simply the application of \textbf{scipy.linalg.pinv} to the bidiagonal and is included for comparison.  Method 1 is CK decomposition implemented only for bidiagonal \textbf{sympy} matrices. Because MacDuffee's formula is already excellent for computing pseudoinverses in computer algebra systems, we include the CK decomposition here not so much as a tool for calculating pseudo-inverses but as a tool for producing symbolic CK decompositions. We also include the sympy version because it seems to be especially amenable to large language model translation into other programming languages and libraries.  

Method 2 is the dual normal method.  Method 3 has two versions.  The first does not require gebrd/orgbr and is the one tested in \textbf{Method3Tests.py}  The second Method 3 is Inplace with Cython wrapping to gebrd/orbgr.  Code is provided to construct the Cython wrappings to gebrd/orbgr in LAPACK.  A large language model can combine Method 1 and Method 3 for a LAPACK version of CK decomposition for Cython bindings to gebrd/orbgr.  

The speed tests are included simply to demonstrate that these 3 methods are \emph{no slower} in general than the usual SVD pinv. Significant speedups would require implementations that do not rely on the Python interpreter as much as these implementations do.  That given, even with the Pythonic slowdown, Method 3 with Cython bindings is comparable in speed to \textbf{scipy.linalg.pinv}.  Table \ref{Table2} verifies that Method 3 is comparable in speed.   

\begin{table}[htb]
    \centering
    \begin{tabular}{c|ccc}
        m \textbackslash \; n &  10 & 100 & 1000 \\ 
        \hline
         10  & $ 0.90 \times \pm 0.17$ & $ 0.96 \times \pm 0.15$ & $ 1.23 \times \pm 0.16$ \\
        100  & $ 1.02 \times \pm 0.10$ & $ 0.64 \times \pm 0.09$ & $ 1.07 \times \pm 0.16$ \\
       1000  & $ 0.90 \times \pm 0.07$ & $ 0.67 \times \pm 0.11$ & $ 0.62 \times \pm 0.05$ 
    \end{tabular}
    \caption{Values > 1 correspond to Method 3 being that many times faster than \textbf{scipy.linalg.pinv}.}
    \label{Table2}
\end{table}

In addition,  the Lanczos bidiagonalization method can be repurposed for direct Moore-Penrose pseudoinverse calculations with error bounds as in Theorem \ref{Theorem2}.  The bound (\ref{ProofBound}) is sufficient to prove Theorem \ref{Theorem2}, but notice that
$$
Av_{j}=\alpha_{j}u_{j}+\beta_{j-1}u_{j-1}
$$
where $u_{j}$ are columns of $U_{k}$ and $v_{j}$ are columns of $V_{k}.$ 
In matrix form, this is 
$$ AV_{k}=U_{k}B_{k} $$
However, $A_{k}=U_{k}B_{k}V_{k}^{T}$ implies that 
\begin{eqnarray*}
R_{k}V_{k} &=&AV_{k}-A_{k}V_{k} \\
&=&U_{k}B_{k}-U_{k}B_{k}V_{k}^{T}V_{k}=0
\end{eqnarray*}
so that also $R_{k}A_{k}\dg =R_{k}V_{k}B_{k}U_{k}^{T}=0.$  Thus, 
(\ref{ProofBound}) reduces to 
$$
\left\Vert AX_{k}A-A\right\Vert_{F}\leq \left\Vert A_{k}A_{k}^{\dag
}R_{k}\right\Vert_{F}+\left\Vert R_{k}\right\Vert_{F}\leq \left\Vert
R_{k}\right\Vert \left( \sqrt{n}+1\right)
$$
since $A_{k}A_{k}\dg $ is an orthogonal projection.  In addition, 
(\ref{sumsquares}) can be written 
$$
\left\Vert A\right\Vert_{F}^{2}=\left\Vert U_{k}B_{k}V_{k}^{T}\right\Vert_{F}^{2}+\left\Vert R_{k}\right\Vert_{F}^{2}
$$
The Frobenius norm squared of the first term on the right is 
\begin{eqnarray*}
\left\Vert U_{k}B_{k}V_{k}^{T}\right\Vert_{F}^{2} &=&\tr\left( \left(
U_{k}B_{k}V_{k}^{T}\right) \left( U_{k}B_{k}V_{k}^{T}\right)^{T}\right) \\
&=&\tr\left( U_{k}B_{k}V_{k}^{T}V_{k}B_{k}^{T}U_{k}^{T}\right) \\
&=&\tr\left( U_{k}B_{k}B_{k}^{T}U_{k}^{T}\right) =\tr\left(
U_{k}^{T}U_{k}B_{k}B_{k}^{T}\right)
\end{eqnarray*}
As a result, (\ref{sumsquares}) reduces to 
$$
\left\Vert A\right\Vert_{F}^{2}=\left\Vert B_{k}\right\Vert_{F}^{2}+\left\Vert R_{k}\right\Vert_{F}^{2}
$$
Moreover, if $\varepsilon_{k}=\left\Vert R_{k}\right\Vert_{F}^{2}$ is the Frobenius `` energy difference" between $
\left\Vert A\right\Vert_{F}^{2}$ and $\left\Vert B_{k}\right\Vert_{F}^{2},$
then 
$$
\varepsilon_{k}=\left\Vert A\right\Vert_{F}^{2}-\sum_{i=1}^{k}\left(
\alpha_{i}^{2}+\beta_{i}^{2}\right)
$$
That is, the algorithm stops once $\varepsilon_{k}$ is sufficiently small.

Calculating the Moore-Penrose pseudo-inverse is often
application-specific.  We have augmented existing methods with direct methods as fast or faster
than using SVD-based calculations and even faster in some instances than
highly efficient least squares solvers.  Most importantly, the three methods provide
new approaches to pseudo-inverses of large, sparse matrices.

\section{Relationship to the Group Inverse}
Recall that the group inverse of a square matrix $\mathcal{A}\in \mathbb{R}^{d\times d}$ requires that $\mathcal{A}$ have an index of 1.  In \cite{benisrael2003generalized}, it is shown that $\mathcal{A
}$ has a limit of 1 only if 
$$
\lim_{\varepsilon \rightarrow 0}\left( \mathcal{A}+\varepsilon I_{n}\right)^{-1}\mathcal{A}   \quad \text{exists}
$$
in which case the group inverse $A^{\#}$ is given by 
\begin{equation}
\mathcal{A}^{\#}=\lim_{\varepsilon \rightarrow 0}\left( \mathcal{A}^{2}+\varepsilon^2 I_{n}\right)^{-1}\mathcal{A}  \label{GroupLimDef}
\end{equation}
In particular, if $\mathcal{A}$ is diagonalizable, then 
$$
\lim_{\varepsilon \rightarrow 0}\left( \mathcal{A}^{2}+\varepsilon
I_{n}\right)^{-1}\mathcal{A}=P\Lambda^{\dag }P^{-1}
$$
where $\Lambda $ is the diagonal matrix of eigenvalues of $\mathcal{A}.$ \
As a result, $\mathcal{A}^{\#}$ is the unique $n\times n$ matrix for which 
$$
\mathcal{AA}^{\#}\mathcal{A}=\mathcal{A},   \mathcal{A}^{\#}\mathcal{AA}^{\#}=\mathcal{A},  \mathcal{AA}^{\#}=\mathcal{A}^{\#}\mathcal{A}
$$

A consequence is that a symmetric matrix has a group inverse.  For example,
if $A\in \mathbb{R}^{m\times n},$ then let $d=m+n$ and let 
$$
\mathcal{A}=\left[ 
\begin{array}{cc}
0 & A \\ 
A^{T} & 0
\end{array}
\right] 
$$
The $\left( m+n\right) \times \left( m+n\right) $ matrix $\mathcal{A}$ is a 
\emph{Jordan-Wielandt Symmetric Augmented matrix }over the vector space $
\mathbb{R}^{m+n}$ \cite{golub2013matrix}.  For any $\varepsilon
\neq 0$ we have 
$$
\mathcal{A}^{2}+\varepsilon I_{n}=\left[ 
\begin{array}{cc}
AA^{T}+\varepsilon I_{n} & 0 \\ 
0 & A^{T}A+\varepsilon I_{m}
\end{array}
\right] 
$$
from which (\ref{GroupLimDef}) implies that 
$$
\mathcal{A}^{\#}=\left[ 
\begin{array}{cc}
0 & \left( A^{T}\right)^{\dag } \\ 
A^{\dag } & 0
\end{array}
\right] 
$$
The singular values of $\mathcal{A}$ are those of $A$ but each with twice
the multiplicity.  Thus, the condition number of $\mathcal{A}$ is the same
as that of $A.$

Moreover, if $A=UBV^{T}$ where $U\in \mathbb{R}^{m\times m},$ $V\in \mathbb{R
}^{n\times n}$ are orthogonal, then $\mathcal{A=WBW}^{T}$ where 
$$
\mathcal{W}=\left[ 
\begin{array}{cc}
0 & V \\ 
U & 0
\end{array}
\right] ,   \mathcal{B}=\left[ 
\begin{array}{cc}
0 & B \\ 
B^{T} & 0
\end{array}
\right] 
$$
and $\mathcal{W}$ is also orthogonal. Although $\mathcal{B}$ is neither
bidiagonal or tridiagonal, it is sparse and 
$$
 \mathcal{B}^{2}=\left[ 
\begin{array}{cc}
BB^{T} & 0 \\ 
0 & B^{T}B
\end{array}
\right] 
$$
As we showed above, we can assume $n=m+1$ when calculating pseudoinverses of
bidiagonal matrices. 

In addition, if we define the $\left( m+n\right) \times \left( m+n\right) $
orthogonal matrix  
$$
\mathcal{E}=\left[ 
\begin{array}{cc}
0 & I_{n} \\ 
I_{m} & 0
\end{array}
\right] 
$$
(recall that $n=m+1$), then $\mathcal{EB}$ is bidiagonal and of the form  
$$
\mathcal{EB=}\left[ 
\begin{array}{cc}
B^{T} & 0 \\ 
0 & B
\end{array}
\right] 
$$
with diagonal coefficients $\beta_{1},\ldots ,\beta_{m},\alpha_{1},\ldots
,\mathcal{\alpha }_{m}$ respectively. The transpose of $\mathcal{E}$ is 
$$
\mathcal{E}^{T}=\left[ 
\begin{array}{cc}
0 & I_{m} \\ 
I_{n} & 0
\end{array}
\right] 
$$
and it is straightforward to show that $\left( \mathcal{EB}\right)^{\dag }=
\mathcal{B}^{\dag }\mathcal{E}^{T}.$  

That is, Moore-Penrose pseudo-inverses are a special case of the group
inverse of a square matrix.  Thus, techniques developed for one can often
be applied to the other.  To illustrate, we notice that for
any $\varepsilon >0$ that
$$
\left( \mathcal{A}^{2}+\varepsilon^{2}I_{n}\right)^{-1}\mathcal{A=}\frac{1
}{2}\left[ \left( \mathcal{A}+i\varepsilon I_{n}\right)^{-1}+\left( 
\mathcal{A}-i\varepsilon I_{n}\right)^{-1}\right] 
$$
Consequently, if $\mathcal{A}$ is real symmetric, (\ref{GroupLimDef})
implies that its group inverse is 
\begin{equation}
\mathcal{A}^{\#}=\lim_{\varepsilon \rightarrow 0}\Re\left(   \left( 
\mathcal{A}+i\varepsilon I_{n}\right)^{-1} \right)     \label{RealReg2}
\end{equation}
Moreover, it follows that there is a family of matrices $
K_{\varepsilon },\varepsilon >0$ for which  
$$
\Re\left(   \left( \mathcal{A}+i\varepsilon I_{n}\right)^{-1}\
\right) =\left[ 
\begin{array}{cc}
0 & K_{\varepsilon } \\ 
K_{\varepsilon }^{T} & 0
\end{array}
\right] 
$$
and it also follows that  
$$
\Re\left(   \left( \mathcal{A}+i\varepsilon I_{n}\right)^{-1}\
\right) =\mathcal{A}^{\#}+\varepsilon^{2}\mathcal{P+\ldots }
$$
That is, $K_{\varepsilon }$ is actually a function of $\varepsilon^{2}$,
thus implying that the regularization converges quickly to $\mathcal{A}^{\#}.
$ This can also be generalized and is a ripe area for further exploration
and faster algorithms for calculating and approximating pseudo-inverses.

For example, for $\alpha_{1},\alpha_{2},\beta_{2},\beta_{3}$ nonzero, we
note that
$$
B=\left[ 
\begin{array}{cccc}
\alpha_{1} & 0 & 0 & 0 \\ 
0 & \alpha_{2} & \beta_{2} & 0 \\ 
0 & 0 & 0 & \beta_{3}
\end{array}
\right] \quad \implies \quad B^{\dag }=\left[ 
\begin{array}{ccc}
\frac{1}{\alpha_{1}} & 0 & 0 \\ 
0 & \frac{\alpha_{2}}{\alpha_{2}^{2}+\beta_{2}^{2}} & 0 \\ 
0 & \frac{\beta_{2}}{\alpha_{2}^{2}+\beta_{2}^{2}} & 0 \\ 
0 & 0 & \frac{1}{\beta_{3}}
\end{array}
\right] 
$$
Straightforward calculation shows that for $\varepsilon >0,$ we have
$$
\Re\left( \left( \mathcal{B}+i\varepsilon I_{n}\right)^{-1}\right) =
\left[ 
\begin{array}{cc}
0 & K_{\varepsilon }^{T} \\ 
K_{\varepsilon } & 0
\end{array}
\right] ,  \quad     K=\left[ 
\begin{array}{ccc}
\frac{\alpha_{1}}{\varepsilon^{2}+\alpha_{1}^{2}} & 0 & 0 \\ 
0 & \frac{\alpha_{2}}{\varepsilon^{2}+\alpha_{2}^{2}+\beta_{2}^{2}} & 0
\\ 
0 & \frac{\beta_{2}}{\varepsilon^{2}+\alpha_{2}^{2}+\beta_{2}^{2}} & 0
\\ 
0 & 0 & \frac{\beta_{3}}{\varepsilon^{2}+\beta_{3}^{2}}
\end{array}
\right] 
$$
That is, as long as $\varepsilon^{2}$ is sufficiently close to 0, we need
not require $\varepsilon >0$ to be close to 0.  

\section{Conclusions and Future Work}

As data science and machine learning consume larger and larger datasets in
more and more applications, the need for reliable sparse matrix friendly
algorithms is likewise increasing.  This paper has explored three methods
for calculating the Moore-Penrose pseudoinverse of a matrix $A\in \mathbb{R}^{m\times n}$ by reducing 
to a Moore-Penrose pseudoinverse of a bidiagonal matrix $B$.  We also developed a complex regularization approach that
produces good approximations to $A^{\dag }$ while avoiding conditioning
issues.  All are appropriate for sparse matrices and are at least as fast
as standard SVD-based methods for calculating pseudoinverses.  

Moreover, the three bidiagonal pseudoinverse methods -- CK decomposition, Dual Normals, and in-place
can each be combined with any bidiagonalization approach, thus implying at
least 7 different approaches are possible.  There are also a number of
variations on these variations.  Since the method used in any given
application is often specific to that application, we have presented these
methods in a way that allows a variety of related methods to be pursued. \
All allow applications to work directly with the pseudoinverse concept and
all these methods (except the regularization) calculate the pseudoinverse
directly. 

For example, Woodbury tearing and (\ref{WoodIdent}) can be applied more broadly. Given 
an $m\times n$ real matrix $A$, we can form the Jordan-Wielandt matrix $\mathcal{A}$ and can then bidiagonalize 
to obtain the $(m+n)\times(m+n)$ bidiagonal form $\mathcal{B}$.  Woodbury tearing can then be applied to  
$$\Re\left(\; \left( B+i\varepsilon\right)^{-1} \;\right) $$
for some suitable $\varepsilon > 0$.  Care must be taken in any of these variations, however, as different 
applications have varying degrees of sparsity, conditioning, and a need for speed. 

It is worth recalling that direct methods are not necessarily
the best approaches in general \emph{for all applications.  }The SVD based
approaches allow noise cutoffs and rank estimation after having calculated all
the singular values.  The CK decomposition allows a similar approach --
but only similar.  Conditioning issues are always a factor in direct
methods. 

Nonetheless, there are already applications
that require either speed or sparsity (or both) for repetitive application of a pseudoinverse. 
There likely are more to come.  In these instances, we
have shown that the methods in this paper are no worse than the traditional
SVD-based approach, and have the potential to be better in many
applications. 

\section*{Acknowledgments}
Generative AI was not used in the preparation of this manuscript.  However, the Large Language Model \emph{Gemini Pro} 
was used to generate python scripts (tests and straightforward scripts); to find bugs in author generated codes (especially those 
related to Cython LAPACK bindings); to identify similar approaches in the literature that the author 
may have overlooked; and to collaboratively produce verifications and suggestions at the request of the author.

\bibliographystyle{unsrt}
\bibliography{references} 

@book{benisrael2003generalized,
    author = {Ben-Israel, Adi and Greville, Thomas N. E.},
    edition = {2nd},
    publisher = {Springer-Verlag New York},
    title = {Generalized Inverses: Theory and Applications},
    year = {2003}
}

@article{bunch1974partitioning,
  title={Partitioning, tearing and modification of sparse linear systems},
  author={Bunch, James R and Rose, Donald J},
  journal={Journal of Mathematical Analysis and Applications},
  volume={48},
  number={3},
  pages={574--593},
  year={1974},
  publisher={Elsevier},
  doi={10.1016/0022-247X(74)90117-X}
}

@article{fuhry2011new,
  title={A new Tikhonov regularization method},
  author={Fuhry, Martin and Reichel, Lothar},
  journal={Numerical Algorithms},
  volume={59},
  pages={433--445},
  year={2011},
  publisher={Springer},
  doi={10.1007/s11075-011-9498-x}
}

@article{golub1965calculating,
    author = {Golub, Gene H and Kahan, William},
    doi = {10.1137/0702016},
    journal = {Journal of the Society for Industrial and Applied Mathematics, Series B: Numerical Analysis},
    number = {2},
    pages = {205--224},
    publisher = {SIAM},
    title = {Calculating the singular values and pseudo-inverse of a matrix},
    volume = {2},
    year = {1965}
}

@article{golub1970singular,
    author = {Golub, Gene H and Reinsch, Christian},
    doi = {10.1007/BF02163027},
    journal = {Numerische Mathematik},
    number = {5},
    pages = {403--420},
    publisher = {Springer},
    title = {Singular value decomposition and least squares solutions},
    volume = {14},
    year = {1970}
}

@book{golub2013matrix,
    address = {Baltimore, MD},
    author = {Golub, Gene H and Van Loan, Charles F},
    edition = {4th},
    isbn = {978-1421407944},
    publisher = {Johns Hopkins University Press},
    title = {Matrix Computations},
    year = {2013}
}

@article{grisetti2020least,
  title={Least squares optimization: From theory to practice},
  author={Grisetti, Giorgio and Guadagnino, Tiziano and Aloise, Irvin and Colosi, Mirco and Della Corte, Bartolomeo and Schlegel, Dominik},
  journal={Robotics},
  volume={9},
  number={3},
  pages={51},
  year={2020},
  publisher={MDPI}
}

@article{hager1989updating,
  title={Updating the inverse of a matrix},
  author={Hager, William W},
  journal={SIAM Review},
  volume={31},
  number={2},
  pages={221--239},
  year={1989},
  publisher={SIAM},
  doi={10.1137/1031049}
}

@article{higham2023power,
  title={The power of bidiagonal matrices},
  author={Higham, Nicholas J},
  journal={arXiv preprint arXiv:2311.06609},
  year={2023}
}

@article{hoefler2021sparsity,
  title={Sparsity in Deep Learning: Pruning and growth for efficient inference and training in neural networks},
  author={Hoefler, Torsten and Alistarh, Dan and Ben-Nun, Tal and Dryden, Nikoli and Peste, Alexandra},
  journal={Journal of Machine Learning Research (JMLR)},
  volume={22},
  number={241},
  pages={1--124},
  year={2021},
  note={An exhaustive survey of over 300 papers detailing how sparse matrices and network pruning are mandatory for reducing the memory footprint and energy costs of modern deep learning.}
}

@misc{mishra2021accelerating,
  title={Accelerating Sparse Deep Neural Networks},
  author={Mishra, Asit and Latorre, Jorge Albericio and Pool, Jeff and Stosic, Darko and Stosic, Dusan and Venkatesh, Ganesh and Yu, Chong and Micikevicius, Paulius},
  howpublished={arXiv preprint arXiv:2104.08378},
  year={2021},
  note={Details the hardware-level integration of sparse matrix math units (Sparse Tensor Cores) in modern NVIDIA architectures, proving that the hardware industry is pivoting to natively support sparse linear algebra.}
}

@article{paige1982lsqr,
    author = {Paige, Christopher C. and Saunders, Michael A.},
    journal = {ACM Transactions on Mathematical Software (TOMS)},
    number = {1},
    pages = {43--71},
    publisher = {ACM New York, NY, USA},
    title = {LSQR: An algorithm for sparse linear equations and sparse least squares},
    volume = {8},
    year = {1982}
}

@article{penrose1955generalized,
    author = {Penrose, Roger},
    journal = {Mathematical Proceedings of the Cambridge Philosophical Society},
    number = {3},
    pages = {406--413},
    publisher = {Cambridge University Press},
    title = {A generalized inverse for matrices},
    volume = {51},
    year = {1955}
}

@article{simon2000low,
  title={Low-rank matrix approximation using the Lanczos bidiagonalization process with applications},
  author={Simon, Horst D and Zha, Hongyuan},
  journal={SIAM Journal on Scientific Computing},
  volume={21},
  number={6},
  pages={2257--2274},
  year={2000},
  publisher={SIAM}
}

@techreport{thomas1949elliptic,
  title={Elliptic Problems in Linear Difference Equations over a Network},
  author={Thomas, Llewellyn H},
  year={1949},
  institution={Watson Scientific Computing Laboratory, Columbia University},
  address={New York, NY}
}

@inproceedings{wang2025schwarz,
  title={Schwarz-Schur Involution: Lightspeed Differentiable Sparse Linear Solvers},
  author={Wang, Yu and Abulnaga, S. Mazdak and Balbastre, Ya{\"e}l and Fischl, Bruce},
  booktitle={Proceedings of the 42nd International Conference on Machine Learning (ICML)},
  year={2025},
  note={Breaks the assumption that direct sparse solvers are too slow for neural architectures, accelerating them by orders of magnitude so they can be integrated into end-to-end AI pipelines for PDE solving.}
}

@inproceedings{zaheer2020bigbird,
  title={Big Bird: Attention with Scant Evidence},
  author={Zaheer, Manzil and Guruganesh, Guru and Dubey, Kumar Avinava and Ainslie, Joshua and Alberti, Chris and Ontanon, Santiago and Pham, Philip and Ravula, Anirudh and Wang, Qifan and Yang, Li and others},
  booktitle={Advances in Neural Information Processing Systems (NeurIPS)},
  volume={33},
  pages={17283--17297},
  year={2020},
  note={A foundational paper proving that sparse matrix attention mechanisms can approximate full dense attention, shifting the bottleneck from quadratic to linear scaling for sequence modeling.}
}
\end{document}